\documentclass[11pt]{amsart}
\pdfoutput=1

\usepackage{amssymb}
\usepackage{microtype}
\usepackage{mathtools}
\usepackage[margin=1.25in]{geometry}

\usepackage[numbers]{natbib}
\usepackage{enumitem}
\usepackage{aliascnt}

\usepackage{grffile}

\usepackage{hyperref}
\hypersetup{
  pdftitle={Structure-preserving numerical integrators for Hodgkin--Huxley-type systems},
  pdfauthor={Zhengdao Chen, Baranidharan Raman, and Ari Stern},
  pdfsubject={MSC 2010: 65P30, 37M20, 92C20},
  bookmarksopen=false,
}

\theoremstyle{plain}

\theoremstyle{proposition}
\newaliascnt{proposition}{theorem}
\newtheorem{proposition}[proposition]{Proposition}
\aliascntresetthe{proposition}
\theoremstyle{corollary}
\newaliascnt{corollary}{theorem}
\newtheorem{corollary}[corollary]{Corollary}
\aliascntresetthe{corollary}

\theoremstyle{definition}
\newaliascnt{definition}{theorem}
\newtheorem{definition}[definition]{Definition}
\aliascntresetthe{definition}
\newaliascnt{example}{theorem}
\newtheorem{example}[example]{Example}
\aliascntresetthe{example}

\theoremstyle{remark}
\newaliascnt{remark}{theorem}
\newtheorem{remark}[remark]{Remark}
\aliascntresetthe{remark}
\theoremstyle{lemma}
\newaliascnt{lemma}{theorem}

\aliascntresetthe{lemma}

\begin{document}
\author{Zhengdao Chen}
\address{Department of Mathematics,
  Courant Institute of Mathematical Sciences, New York University}
\email{zc1216@nyu.edu}

\author{Baranidharan Raman}
\address{Department of Biomedical Engineering, Washington University in St.~Louis}
\email{barani@wustl.edu}

\author{Ari Stern}
\address{Department of Mathematics and Statistics, Washington University in St.~Louis}
\email{stern@wustl.edu}

\title[Structure-preserving integrators for Hodgkin--Huxley-type systems]{Structure-preserving numerical integrators for Hodgkin--Huxley-type systems}

\subjclass[2010]{65P30, 37M20, 92C20}

\begin{abstract}
  Motivated by the Hodgkin--Huxley model of neuronal dynamics, we
  study explicit numerical integrators for ``conditionally linear''
  systems of ordinary differential equations. We show that splitting
  and composition methods, when applied to the Van~der~Pol oscillator
  and to the Hodgkin--Huxley model, do a better job of preserving
  limit cycles of these systems for large time steps, compared with
  the ``Euler-type'' methods (including Euler's method, exponential
  Euler, and semi-implicit Euler) commonly used in computational
  neuroscience, with no increase in computational cost. These limit
  cycles are important to preserve, due to their role in neuronal
  spiking. Splitting methods even compare favorably to the explicit
  exponential midpoint method, which is twice as expensive per step.
  The second-order Strang splitting method is seen to perform
  especially well across a range of non-stiff and stiff dynamics.
\end{abstract}

\maketitle

\section{Introduction}
\label{sec:intro}

This paper is concerned with the numerical integration of
\emph{conditionally linear} systems of ordinary differential equations
(ODEs) in $ \mathbb{R}^{d} $, which are systems of the form
\begin{equation}
  \label{eqn:cl}
  \dot{ x } _i = a _i ( x ) x _i + b _i (x) , \qquad i = 1 , \ldots, {d} ,
\end{equation}
where $ a _i , b _i $ are real-valued functions depending only on
$ x _j $ for $ j \neq i $. These systems have the defining property
that, if all $ x _j $ with $ j \neq i $ are stationary, then $ x _i $
satisfies a first-order linear ODE with constant
coefficients.\footnote{It is straightforward to generalize what
  follows to the case where $ x _i $ and $ b _i $ are vector-valued
  and $ a _i $ is matrix-valued, so that $ x _i $ satisfies a
  first-order linear \emph{system} of ODEs with constant coefficients
  when $ x _j $ is stationary for $ j \neq i $, but the scalar case
  covers the applications we are interested in.}

Our motivation comes from neuronal dynamics, where ``conditional
linearity is a fairly generic property of nonlinear neuronal models''
(\citet{MaSh1998}). Since these models are nonlinear, computational
neuroscientists must use numerical simulation to study the dynamical
behavior of individual model neurons, as well as networks of such
neurons. We will focus in particular on the model of \citet{HoHu1952},
which is one of the most well-known and widely-used biological neuron
models.

To simulate \eqref{eqn:cl} as efficiently as possible, one wishes to
minimize the number of evaluations of the nonlinear functions $ a _i $
and $ b _i $. (This becomes especially important when $ {d} $ is
large, as is the case for large biological neural networks.) It is
therefore desirable to use an explicit numerical integrator that
allows for large time step sizes while still producing sufficiently
accurate dynamics.  One of the main obstacles to doing so is
\emph{stiffness}: when $ a _i $ is a large negative number, a
traditional explicit Runge--Kutta method (like Euler's method) becomes
numerically unstable unless the time step size is very small. For this
reason, various authors have proposed using explicit methods designed
for stiff systems, such as exponential integrators and semi-implicit
integrators; \citet{BoNe2013} note that such integrators remain stable
for much larger time step sizes when simulating Hodgkin--Huxley
neurons, on the order of 1~ms, whereas traditional methods require
time step sizes on the order of 0.01~ms. They also present numerical
evidence that ``[a]daptive time-stepping is of questionable use'' when
simulating networks of neurons \citep[Section 7]{BoNe2013}, so the
ability to take large time steps is the major arbiter of numerical
efficiency.

However, there is another obstacle to taking large time step sizes,
having to do with preserving the qualitative dynamics of neuronal
spiking in the Hodgkin--Huxley model.  When the input current into a
neuron is low, the membrane voltage is attracted to a resting
equilibrium; however, when the input current exceeds a threshold, the
voltage begins rapidly rising and falling periodically. These voltage
spikes, called action potentials, are the mechanism by which neurons
send signals to one another. From a dynamical systems point of view,
this corresponds to a bifurcation: the ``resting'' fixed point becomes
unstable, and the system is attracted to a stable ``spiking'' limit
cycle lying on a two-dimensional center manifold
(\citet{Hassard1978,Izhikevich2007}).  In order to simulate these
dynamics faithfully and efficiently, it is therefore desirable that a
numerical integrator be able to preserve these limit cycles at large
time step sizes. Yet, Euler's method does a poor job of preserving
limit cycles in nonlinear dynamical systems, even for simple systems
like the Van~der~Pol oscillator, unless one takes very small time
steps---even smaller than one would need for numerical stability
(\citet{HaLu1999}).

In this paper, we investigate this problem of limit cycle
preservation. We show that the exponential Euler and semi-implicit
Euler (SI Euler) methods, while superior to the traditional Euler
method in terms of stability, have the same fundamental problem of
non-preservation of limit cycles. We introduce a new family of
splitting and composition methods specifically designed for
conditionally linear systems. These methods are just as efficient as
the exponential and SI Euler methods, with comparable stability
behavior, but they do much better at preserving limit cycles. The
splitting methods even compare favorably to the exponential midpoint
method proposed by \citet{BoNe2013}, at half the computational
cost. These properties are demonstrated both theoretically (using
backward error analysis) and numerically for the Van~der~Pol
oscillator, and numerically for the Hodgkin--Huxley model neuron. In
particular, a Strang splitting method emerges as the best performer
among all the methods considered.

\section{Numerical integrators for conditionally linear systems}

\subsection{Preliminaries}
\label{sec:prelim}

All of the numerical integrators that we consider, old and new, are
based on the fact that linear ODEs with constant coefficients can be
solved exactly in closed form. That is, 
\begin{equation*}
  \dot{x} = a x + b 
\end{equation*} 
has the time-$h$ flow given by the explicit formula
\begin{equation*}
  x ( t + h ) = \exp ( h a ) x (t) + \frac{ \exp ( h a ) - 1 }{ h a } h b .
\end{equation*}
Here, $ z \mapsto ( \exp z - 1 ) / z $ actually means the entire
function obtained by removing the singularity at $ z = 0 $, taking
$ 0 \mapsto 1 $. (This is sometimes called the ``relative error
exponential'' and is implemented in many numerical libraries as
\texttt{exprel}.)

In fact, we can also explicitly compute the flow of any Runge--Kutta
method applied to this ODE---even an implicit Runge--Kutta method---by
replacing the exponential function above with the stability function
$ z \mapsto r (z) $. For instance, Euler's method has stability
function $ r (z) = 1 + z $, so the time-$h$ flow map taking
$ x ^n \mapsto x ^{ n + 1 } $ is given by
\begin{equation*}
  x ^{ n + 1 } = ( 1 + h a ) x ^n + h b = x ^n + h ( a x ^n + b ) .
\end{equation*} 
More interesting is the backward Euler method, with
$ r (z) = 1/ ( 1 - z ) $; while an implicit method for nonlinear ODEs,
in this case it admits the explicit formula
\begin{equation*}
  x ^{ n + 1 }  = \frac{ 1 }{ 1 - h a } x ^n + \frac{ 1 }{ 1 - h a } h b ,
\end{equation*}
which rearranges to the more familiar expression,
\begin{equation*}
  x ^{ n + 1 } = x ^n + h ( a x ^{ n + 1 } + b ) .
\end{equation*} 
For splitting/composition methods, it can sometimes be desirable to
compute an approximate flow, even when the exact flow is
computable. For example, the IMEX method for highly-oscillatory
problems (\citet{StGr2009,McSt2014}) uses the midpoint method instead
of the exact flow of a harmonic oscillator, and the Boris method for
charged particle dynamics uses the midpoint method instead of the
exact flow of the magnetic field (\citet[Eq.~1.4]{HaLu2018}).

As mentioned in the introduction, the defining property of a
conditionally linear system \eqref{eqn:cl} is that, if all $ x _j $
with $ j \neq i $ are stationary, then $ x _i $ satisfies a
first-order linear ODE with constant coefficients. We now introduce
some notation to help formalize this. Write \eqref{eqn:cl} as
$ \dot{x} = f (x) $, where
$ f \colon \mathbb{R}^{d} \rightarrow \mathbb{R}^{d} $ is the vector
field with components
\begin{equation*}
  f _i (x) = a _i (x) x _i + b _i (x) , \qquad i = 1 , \ldots, {d} .
\end{equation*}
For $ i = 1, \ldots, {d} $, let
$ f ^{ ( i ) } \colon \mathbb{R}^{d} \rightarrow \mathbb{R}^{d} $ be
the vector field
\begin{equation*}
  f ^{ (i) } _j (x) =
  \begin{cases}
    f _i (x), & \text{if } i = j ,\\
    0 , &\text{if } i \neq j .
  \end{cases}
\end{equation*}
Let
$ \varphi ^{ (i) } _h \colon \mathbb{R}^{d} \rightarrow \mathbb{R}^{d}
$ denote the exact time-$h$ flow of $ f ^{ (i) } $. Since
$ f ^{ (i) } $ holds $ x _j $ stationary for $ j \neq i $, it follows
that this flow can be written in closed form as
\begin{equation*}
  \varphi _{h,j} ^{ (i) } (x) =
  \begin{cases}
    \exp \bigl( h a _i (x) \bigr) x _i + \dfrac{ \exp \bigl( h a _i (x)
      \bigr) - 1 }{ h a _i (x) } h b _i (x) , &\text{if } i = j ,\\
    x _j ,&\text{if } i \neq j .
  \end{cases}
\end{equation*}
(Here, we use $ \varphi ^{ (i) } _{h,j} $ to denote the $ j $th
component of $ \varphi ^{ (i) } _h $.) Following the discussion above,
we might instead approximate $ \varphi _h ^{ (i) } $ by an approximate
flow $ \Phi _h ^{ (i) } $, e.g., by applying a Runge--Kutta method to
$ f ^{ (i) } $ with time step size $h$, in which case we would simply
replace the exponential function in this formula by the stability
function of the method. Note that we may use a different Runge--Kutta
method (or the exact flow) for each $i = 1 , \ldots , {d}$.

We have therefore decomposed the vector field $f$ we wish to integrate
as
\begin{equation*}
  f = f ^{ (1) } + \cdots + f ^{ ({d}) } .
\end{equation*} 
Due to conditional linearity, we may integrate each $ f ^{ (i) } $
exactly---or with an arbitrary, possibly-implicit Runge--Kutta
method---with only a single evaluation of each of the nonlinear
functions $ a _i , b _i $. The motivating idea, for all of the methods
considered below, is to combine these flows in such a way as to
approximate the flow of $f$.

\subsection{Euler-type methods}

Using the notation established in \autoref{sec:prelim}, we now define
a family of ``Euler-type'' methods for conditionally linear
systems. This family includes the classical Euler method, as well as
the exponential Euler and SI Euler methods, among others.

\begin{definition}
  An \emph{Euler-type method} for \eqref{eqn:cl} has the form
  \begin{equation}
    \label{eqn:euler-type}
    x ^{ n + 1 } _i = \Phi ^{ (i) } _{ h, i } ( x ^n ) , \qquad i = 1, \ldots, {d} ,
  \end{equation}
  where each $ \Phi ^{ (i) } _h $ is either an exact or approximate
  time-$h$ flow for $ f ^{ ( i ) } $.  More explicitly, this flow has
  the form
  \begin{equation*}
    x _i ^{ n + 1 } = r _i \bigl( h a _i (x ^n ) \bigr) x _i ^n + \dfrac{ r _i \bigl( h a _i (x ^n ) \bigr) - 1 }{ h a _i (x ^n ) } h b _i (x ^n ) , \qquad i = 1 , \ldots, {d} , 
  \end{equation*}
  where $ r _i (z) = 1 + z + \mathcal{O} ( z ^2 ) $ is either the
  exponential function or an approximate exponential (e.g., the
  stability function of a Runge--Kutta method, a Pad\'e approximant,
  etc.).
\end{definition}

The key feature of these methods is that the nonlinear functions
$ a _i , b _i $ are only evaluated at $ x ^n $ when advancing
$ x ^n \mapsto x ^{ n + 1 } $. In other words, all of the components
are stepped forward in parallel.

\begin{example}[Euler's method]
  Suppose $ \Phi _h ^{ (i) } $ is Euler's method applied to
  $ f ^{ (i) } $ with time step size $h$, for $ i = 1 , \ldots, {d}
  $. Then \eqref{eqn:euler-type} becomes
  \begin{equation*}
    x _i ^{ n + 1 } = x _i ^n + h \bigl( a _i ( x ^n ) x _i ^n + b _i ( x ^n ) \bigr) , \qquad i = 1 , \ldots, {d}, 
  \end{equation*}
  which is just Euler's method applied to $f$.
\end{example}

\begin{example}[exponential Euler method]
  Suppose $ \Phi _h ^{ (i) } = \varphi _h ^{ (i) } $ is the exact
  time-$h$ flow of $ f ^{ (i) } $, for $ i = 1, \ldots, {d} $. Then
  \eqref{eqn:euler-type} becomes
  \begin{equation*}
    x _i ^{ n + 1 } = \exp \bigl( h a _i (x ^n ) \bigr) x _i ^n + \dfrac{ \exp \bigl( h a _i (x ^n )  \bigr) - 1 }{ h a _i (x ^n ) } h b _i (x ^n ), \qquad i = 1, \ldots, {d} .
  \end{equation*}
  This is the \emph{exponential Euler method}.
\end{example}

\begin{example}[SI Euler method]
  Suppose $ \Phi _h ^{ (i) } $ is the backward Euler method applied to
  $ f ^{ (i) } $ with time step size $h$, for $ i = 1 , \ldots, {d}
  $. Then \eqref{eqn:euler-type} becomes
  \begin{equation*}
    x _i ^{ n + 1 }  = \frac{ 1 }{ 1 - h a _i ( x ^n ) } x ^n _i + \frac{ 1 }{ 1 - h a _i ( x ^n ) } h b _i ( x ^n ) , \qquad i = 1, \ldots, {d} ,
  \end{equation*}
  which can also be written as
  \begin{equation*}
    x _i ^{ n + 1 } = x _i ^n + h \bigl( a _i ( x ^n ) x _i ^{ n + 1 } + b _i ( x ^n ) \bigr) , \qquad i = 1, \ldots, {d} .
  \end{equation*}
  This is the \emph{semi-implicit Euler} (or \emph{SI Euler})
  \emph{method}.
\end{example}

All of these methods have order $1$, and all reduce to Euler's method
in the special case that $ a _i = 0 $ for all $ i = 1, \ldots, {d}
$.

\subsection{The exponential midpoint method}

\citet{BoNe2013} also considered an explicit second-order method,
called the exponential midpoint method. Although this is neither an
Euler-type nor a splitting/composition method, we include it for
comparison.

\begin{definition}
  The \emph{exponential midpoint method} for \eqref{eqn:cl} is
  \begin{alignat*}{2}
    x _i ^{ n + 1/2 } &= \exp \bigl( \tfrac{1}{2} h a _i (x ^n )
    \bigr) x _i ^n + \dfrac{ \exp \bigl( \tfrac{1}{2} h a _i (x ^n )
      \bigr) - 1 }{ \tfrac{1}{2} h a _i (x ^n ) } \tfrac{1}{2} h b _i
    (x ^n ), &\qquad i &= 1, \ldots, {d} .\\
    x _i ^{ n + 1 } &= \exp \bigl( h a _i (x ^{n+1/2} ) \bigr) x _i ^n
    + \dfrac{ \exp \bigl( h a _i (x ^{n+1/2} ) \bigr) - 1 }{ h a _i (x
      ^{n+1/2} ) } h b _i (x ^{n+1/2} ), &\qquad i &= 1, \ldots, {d} .
  \end{alignat*}
\end{definition}

The second line is similar to exponential Euler, except $ a _i $ and
$ b _i $ are evaluated at the approximate midpoint $ x ^{ n + 1/2 } $,
obtained by first taking a half-step of exponential Euler. Note that
each nonlinear function is evaluated twice, first at $ x ^n $ and
again at $ x ^{ n + 1/2} $, so this method is twice as expensive per
step as an Euler-type method.

\begin{remark}
  This is an example of a two-stage explicit exponential Runge--Kutta
  method. One may also construct higher-order methods with more
  intermediate stages (and thus more function evaluations per
  step). See \citet{HoOs2010} for a survey of these and other
  exponential integrators.  Also, as before, one may replace the
  exponential function with an approximate exponential, but we will
  not consider that generalization here.
\end{remark}

\subsection{Splitting and composition methods}
\label{sec:splitting}

Before introducing the particular splitting and composition methods
that we propose for conditionally linear systems, we briefly review
what these classes of methods are, in general. For more on the general
theory and application of such methods, we refer the interested reader
to the survey by \citet{McQu2002}.

A splitting method, for an arbitrary dynamical system
$ \dot{x} = f (x) $, is based on the idea of decomposing (or
``splitting'') the vector field $f$ into a sum of vector fields
$ f = f ^{ (1) } + \cdots + f ^{ (m) } $ and approximating the
time-$h$ flow of $f$ by a composition of flows of the $ f ^{ (i) } $.
(Here, $f$ need not be conditionally linear, and it might even be a
vector field on a manifold.)  For example, if
$ f = f ^{ (1) } + f ^{ (2) } $, one might approximate the time-$h$
flow of $f$ by either of
\begin{equation*}
  \varphi ^{ (1) } _h \circ \varphi ^{ (2) } _h, \qquad \varphi ^{ (2) } _{ h / 2 } \circ \varphi ^{ (1) } _h \circ \varphi ^{ (2) } _{h/2} .
\end{equation*}
(One may also interchange $ \varphi ^{ (1) } $ and
$ \varphi ^{ (2) } $, but we prefer to think of that as corresponding
to the alternative splitting of $f$ that interchanges $ f ^{ (1) } $
and $ f ^{ (2) } $.) The first of these, called the \emph{Lie--Trotter
  splitting} (\citet{Trotter1959}) approximates the flow of $f$ with
order $1$; the second, known as the \emph{Strang splitting}
(\citet{Strang1968}), approximates it with order $2$. Although it
appears that the Strang splitting requires more function evaluations
than the Lie--Trotter splitting, note that we may use the semigroup
property
$ \varphi _h ^{ (2) } = \varphi _{ h / 2 } ^{ (2) } \circ \varphi _{ h
  / 2 } ^{ (2) } $ to write
\begin{multline*}
  (\varphi ^{ (2) } _{ h / 2 } \circ \varphi ^{ (1) } _h \circ \varphi ^{ (2) } _{h/2}   ) \circ \cdots \circ (\varphi ^{ (2) } _{ h / 2 } \circ \varphi ^{ (1) } _h \circ \varphi ^{ (2) } _{h/2}   )\\
  = \varphi ^{ (2) } _{ h / 2 } \circ ( \varphi _h ^{ (1) } \circ
  \varphi _h ^{ (2) } \circ \cdots \circ \varphi _h ^{ (2) } \circ
  \varphi _h ^{ (1) } ) \circ \varphi ^{ (2) } _{ h / 2 }.
\end{multline*}
Therefore, with the exception of the first and last step, both methods
simply alternate between the flows of $ f ^{ (1) } $ and
$ f ^{ (2) } $ and thus have essentially the same computational
cost. It is also possible to construct even higher-order splitting
methods, by alternating several times between fractional steps of
$ \varphi ^{ (1) } $ and $ \varphi ^{ (2) } $, as in the methods of
\citet{Yoshida1990} and \citet{Suzuki1990}.

Composition methods are just like splitting methods, except that the
flow of $ f ^{ ( i ) } $ may be replaced by an approximate
flow. Unlike exact flows, approximate flows do not generally form a
one-parameter group (or semigroup). This motivates the following
definitions, which will prepare us to discuss symmetric compositions
of approximate flows, analogous to the Strang splitting
method.

\begin{definition}
  The \emph{adjoint} of an approximate flow $ \Phi _h $ is defined to
  be $ \Phi ^\ast _h \coloneqq \Phi _{ - h } ^{-1} $. We say that
  $ \Phi _h $ is \emph{symmetric} if it is its own adjoint.
\end{definition}

\begin{remark}
  \label{rmk:rk_adjoint}
  If a Runge--Kutta method has the stability function $r$, then its
  adjoint method has the stability function
  $ r ^\ast (z) = 1/r(-z) $.
\end{remark}

Here are a few illustrative examples:
\begin{itemize}
\item If $ \varphi _h $ is the exact time-$h$ flow of a vector field,
  then the group property
  $ \varphi _h \circ \varphi _{ - h } = \mathrm{id} $ implies that
  $ \varphi _h $ is symmetric.

\item If $ \Phi _h $ is Euler's method, then $ \Phi _h ^\ast $ is the
  backward Euler method. The symmetric methods
  $ \Phi _{ h/2 } ^\ast \circ \Phi _{ h / 2 } $ and
  $ \Phi _{h/2} \circ \Phi _{h/2} ^\ast $ are the trapezoid and
  midpoint methods, respectively.

\item If $ \Phi _h = \varphi _h ^{ (1) } \circ \varphi ^{ (2) } _h $
  is the Lie--Trotter splitting method, then
  $ \Phi _h ^\ast = \varphi _h ^{ (2) } \circ \varphi ^{ (1) } _h $ is
  the Lie--Trotter method for the splitting with $ f ^{ (1) } $ and
  $ f ^{ (2) } $ interchanged. The symmetric methods
  $ \Phi _{ h / 2 } ^\ast \circ \Phi _{ h / 2 } $ and
  $ \Phi _{ h / 2 } \circ \Phi _{ h / 2 } ^\ast $ are the Strang splitting
  methods for the original splitting and for the splitting with
  $ f ^{ (1) } $ and $ f ^{ (2) } $ interchanged, respectively.
\end{itemize}

For instance, given a splitting $ f = f ^{ (1) } + f ^{ (2) } $, we may consider the composition methods
\begin{equation*}
  \Phi _h ^{ (1) } \circ \Phi _h ^{ (2) } , \qquad \Phi _{ h / 2 } ^{ (2) \ast } \circ \Phi _{ h / 2 } ^{ (1) \ast } \circ \Phi _{ h / 2 } ^{ (1) } \circ \Phi _{ h / 2 } ^{ (2) } ,
\end{equation*}
which are the composition-method generalizations of the Lie--Trotter
and Strang splitting methods, respectively. (We recover precisely
these splitting methods in the special case where we take
$ \Phi ^{ (1) } = \varphi ^{ (1) } $ and
$ \Phi ^{ (2) } = \varphi ^{ (2) } $ to be the exact flows.) In
particular, the first composition method has order $1$, while the
second is symmetric and has order $2$. As with splitting methods, one
may construct higher-order (symmetric) composition methods by
alternating several fractional steps of these approximate flows
(\citet{McLachlan1995}).

Before returning to conditionally linear systems, we finally note that
for $ f = f ^{ (1) } + \cdots + f ^{ (m) } $ with arbitrary $m$, we
may generalize the Lie--Trotter and Strang splitting and composition
methods above by the following:
\begin{equation*}
  \Phi _h ^{ (1) } \circ \cdots \circ \Phi _h ^{ (m) } , \qquad \Phi _{ h / 2 } ^{ (m) \ast } \circ \cdots \circ \Phi _{ h / 2 } ^{ (1) \ast } \circ \Phi _{ h / 2 } ^{ (1) } \circ \cdots \circ \Phi _{ h / 2 } ^{ (m) } .
\end{equation*}
Again, the first of these has order $1$, while the second is symmetric
and has order $2$. For more on these splitting and composition methods
and their higher-order generalizations, we again refer the reader to
the survey by \citet{McQu2002}.

\begin{definition}
  Given a conditionally linear system \eqref{eqn:cl}, we consider both
  the \emph{non-symmetric composition method}
  \begin{equation*}
    x ^{ n + 1 } = ( \Phi _h ^{ (1) } \circ \cdots \circ \Phi _h ^{ ({d})} ) ( x ^n ) ,
  \end{equation*}
  and the \emph{symmetric composition method}
  \begin{equation*}
    x ^{ n + 1 } = ( \Phi _{h/2} ^{ ({d}) \ast} \circ \cdots \circ \Phi _{ h/2 }^{ (1)\ast } \circ \Phi _{ h / 2 } ^{ (1) } \circ \cdots \circ  \Phi _{h/2} ^{ ({d})} ) ( x ^n ),
  \end{equation*}
  where each $ \Phi _h ^{ (i) } $ is either an exact or approximate
  time-$h$ flow for $ f ^{ (i) } $. We call these \emph{splitting
    methods} when all of the $ \Phi _h ^{ (i) } $ are exact flows.
\end{definition}

\begin{remark}
  As with Euler-type methods, each flow $ \Phi _h ^{ (i) } $ is
  defined by $ r _i (z) = 1 + z + \mathcal{O} (z ^2) $, which is an
  exact or approximate exponential. As in \autoref{rmk:rk_adjoint},
  its adjoint $ \Phi _h ^{ (i) \ast } $ corresponds to
  $ r _i ^\ast (z) = 1 / r _i ( - z ) $.
\end{remark}

\begin{remark}
  \label{rmk:commuting_flows}
  In the special case where the flows
  $ \Phi _h ^{ ( 1) } , \ldots, \Phi _h ^{ ({d}) } $ commute, the
  non-symmetric splitting method is identical to the corresponding
  Euler-type method. (This is true, for instance, if $ a _i , b _i $
  are constants.) In this case, the symmetric splitting method is
  similarly identical to the Euler-type method
  $ \Phi _{ h , i } = ( \Phi _{ h / 2 } ^{ (i) \ast } \circ \Phi _{ h
    / 2 } ^{ (i) } ) _i $.
\end{remark}

We now give examples of these methods in the $ {d} = 2 $ case, i.e.,
\begin{align*} 
  \dot{ x } _1 &= a _1 ( x _2 ) x _1 + b _1 (x _2 ) ,\\
  \dot{x} _2 &= a _2 (x _1 ) x _2 + b _2 (x _1 ) .
\end{align*} 
Here, we have made explicit in the notation that $ a _1 , b _1 $
depend only on $ x _2 $ and $ a _2 , b _2 $ depend only on $ x _1 $.

\begin{example}[symplectic Euler and St\"ormer/Verlet methods]
  \label{ex:composition}
  Suppose we take $ \Phi _h ^{ (1) } $ to be the time-$h$ flow of
  Euler's method and $ \Phi _h ^{ (2) } $ to be the time-$h$ flow of
  the backward Euler method.

  The non-symmetric composition method can be written as the algorithm
  \begin{align*}
    x _2 ^{ n + 1 } &= x _2 ^n + h \bigl( a _2 ( x _1 ^n ) x _2 ^{n+1} + b _2 ( x _1 ^n ) \bigr) ,\\
    x _1 ^{ n + 1 } &= x _1 ^n + h \bigl( a _1 ( x _2 ^{ n + 1 }  ) x _1 ^n + b _1 ( x _2 ^{n+1} ) \bigr) .
  \end{align*}
  This is an order-$1$ partitioned Runge--Kutta method known as the
  \emph{symplectic Euler method} (since its flow is symplectic when
  applied to Hamiltonian systems). Note that this is actually an
  explicit method, since the first step only requires solving a linear
  equation for $ x _2 ^{ n + 1 } $, as with the {SI} Euler method.

  The symmetric composition method can be written as the algorithm
  \begin{align*}
    x _2 ^{ n + 1/2 } &= x _2 ^n + \frac{1}{2}  h \bigl( a _2 ( x _1 ^n ) x _2 ^{n+1/2} + b _2 ( x _1 ^n ) \bigr) ,\\
    x _1 ^{ n + 1/2 } &= x _1 ^n + \frac{1}{2} h \bigl( a _1 ( x _2 ^{n+1/2} ) x _1 ^n + b _1 ( x _2 ^{n+1/2} ) \bigr) ,\\
    x _1 ^{ n + 1 } &= x _1 ^{n+1/2} + \frac{1}{2} h \bigl( a _1 ( x _2 ^{n+1/2} ) x _1 ^{n+1} + b _1 ( x _2 ^{n+1/2} ) \bigr) ,\\
    x _2 ^{ n + 1 } &= x _2 ^{n+1/2} + \frac{1}{2}  h \bigl( a _2 ( x _1 ^{n+1} ) x _2 ^{n+1/2} + b _2 ( x _1 ^{n+1} ) \bigr) .
  \end{align*}
  Note that the second and third steps can be combined, yielding
  \begin{align*}
    x _2 ^{ n + 1/2 } &= x _2 ^n + \frac{1}{2}  h \bigl( a _2 ( x _1 ^n ) x _2 ^{n+1/2} + b _2 ( x _1 ^n ) \bigr) ,\\
    x _1 ^{ n + 1 } &= x _1 ^n + h \biggl( a _1 ( x _2 ^{n+1/2} ) \frac{ x _1 ^n + x _1 ^{n+1} }{ 2 }  + b _1 ( x _2 ^{n+1/2} ) \biggr) ,\\
    x _2 ^{ n + 1 } &= x _2 ^{n+1/2} + \frac{1}{2}  h \bigl( a _2 ( x _1 ^{n+1} ) x _2 ^{n+1/2} + b _2 ( x _1 ^{n+1} ) \bigr) ,
  \end{align*}
  i.e., we use the fact that
  $ \Phi _{h/2} ^{ (1) \ast } \circ \Phi _{ h / 2 } ^{ (1) } $ is the
  time-$h$ flow of the trapezoid method for $ f ^{ (1) } $.  This is
  an order-$2$ symplectic partitioned Runge--Kutta method known as the
  \emph{St\"ormer/Verlet} (or \emph{leapfrog}) \emph{method}. As with
  the non-symmetric composition, this is actually an explicit method,
  since we need only solve linear equations for $ x _2 ^{ n + 1/2 } $
  and $ x _1 ^{ n + 1 } $.
\end{example}

\begin{example}[splitting methods]
  \label{ex:splitting}
  Suppose we take $ \Phi _h ^{ (1) } = \varphi _h ^{ (1) } $ and
  $ \Phi _h ^{ (2) } = \varphi _h ^{ (2) } $ to be the exact time-$h$
  flows of $ f ^{ (1) } $ and $ f ^{ (2) }$, respectively.

  The non-symmetric splitting method is
  \begin{align*}
    x _2 ^{ n + 1 } &= \exp \bigl( h a _2 (x _1 ^n ) \bigr) x _2 ^n + \dfrac{ \exp \bigl( h a _2 (x _1 ^n )  \bigr) - 1 }{ h a _2 (x _1 ^n ) } h b _2 (x _1 ^n ),\\
    x _1 ^{ n + 1 } &= \exp \bigl( h a _1 (x _2 ^{n+1} ) \bigr) x _1 ^n + \dfrac{ \exp \bigl( h a _1 (x _2 ^{n+1} )  \bigr) - 1 }{ h a _1 (x _2 ^{n+1} ) } h b _1 (x _2 ^{n+1} ),
  \end{align*}
  and the symmetric splitting method is
  \begin{align*}
    x _2 ^{ n + 1/2 } &= \exp \bigl( \tfrac{1}{2} h a _2 (x _1 ^n ) \bigr) x _2 ^n + \dfrac{ \exp \bigl( \tfrac{1}{2} h a _2 (x _1 ^n )  \bigr) - 1 }{ \tfrac{1}{2} h a _2 (x _1 ^n ) } \tfrac{1}{2} h b _2 (x _1 ^n ),\\
    x _1 ^{ n + 1 } &= \exp \bigl( h a _1 (x _2 ^{n+1/2} ) \bigr) x _1 ^n + \dfrac{ \exp \bigl( h a _1 (x _2 ^{n+1/2} )  \bigr) - 1 }{ h a _1 (x _2 ^{n+1/2} ) } h b _1 (x _2 ^{n+1/2} ),\\
    x _2 ^{ n + 1 } &= \exp \bigl( \tfrac{1}{2} h a _2 (x _1 ^{n+1} ) \bigr) x _2 ^{n+1/2} + \dfrac{ \exp \bigl( \tfrac{1}{2} h a _2 (x _1 ^{n+1} )  \bigr) - 1 }{ \tfrac{1}{2} h a _2 (x _1 ^{n+1} ) } \tfrac{1}{2} h b _2 (x _1 ^{n+1} ) .
  \end{align*}
  As previously stated, the non-symmetric splitting method has order
  $1$, whereas the symmetric splitting method has order $2$.
\end{example}

Note that all of the non-symmetric methods (and all of the symmetric
methods) agree in the special case where $ a _i = 0 $ for
$ i = 1 , \ldots, {d} $. For example, when $ {d} = 2 $ and
$ a _1 = a _2 = 0 $, the non-symmetric methods all reduce to
symplectic Euler, while the symmetric methods all reduce to
St\"ormer/Verlet.

\subsection{Combining Euler-type and splitting/composition methods}
\label{sec:euler-splitting}

We briefly discuss a generalization that includes both Euler-type and
splitting/composition methods, as well as methods combining aspects of
each. The idea is to partition the components $ \{ 1, \ldots, {d} \} $
and to apply an Euler-type method \emph{across} partitions while using
a splitting/composition method \emph{within} each partition. We will
not analyze these generalized methods in this paper, but we mention
them due to their parallel-implementation advantages when $ {d} $ is
large.

Let $ 0 = i _0 < \cdots < i _k = {d} $, where each $ i _j $ is an
integer.  This partitions $ \{ 1, \ldots, {d} \} $ into the $k$
subsets $ \{ i _{j-1} + 1 , \ldots, i _j \} $. We may then consider
the non-symmetric method
\begin{equation*}
  \Phi _{ h , i } = ( \Phi _h ^{ ( i _{ j -1 } + 1 ) } \circ \cdots \circ \Phi _h ^{ (i _j ) } ) _i , \qquad i = i _{ j -1 } + 1 , \ldots , i _j , \quad j = 1 , \ldots, k .
\end{equation*}
This is an Euler-type method in the special case when we partition
into $ {d} $ subsets of size $1$, and it is a non-symmetric
composition method when we partition into $1$ subset of size $ {d}
$. Likewise, we may consider
\begin{multline*}
  \Phi _{ h , i } = ( \Phi _{ h/2} ^{ (i _j ) \ast } \circ \cdots \circ \Phi _{ h / 2} ^{ (i_{j-1} + 1 ) \ast } \circ \Phi _{h/2} ^{ ( i _{ j -1 } + 1 ) } \circ \cdots \circ \Phi _{h/2} ^{ (i _j ) } ) _i ,\\
  i = i _{ j -1 } + 1 , \ldots , i _j , \quad j = 1 , \ldots, k ,
\end{multline*}
which is (respectively) an Euler-type method or a symmetric
composition method in the two special cases mentioned above.

While splitting and composition methods have desirable
structure-preserving properties (as we will see in the subsequent
sections), one disadvantage, compared to Euler-type methods, is that
the flows must be evaluated in series rather than in parallel. When
$ {d} $ is large, it may be computationally infeasible to do this. The
generalization above is a compromise that allows one to partition each
step into $k$ pieces that may be computed in parallel. For example,
one might simulate a network of $k$ Hodgkin--Huxley neurons in this
way.

\section{Limit cycle preservation for the Van~der~Pol oscillator}

\subsection{The Van~der~Pol oscillator}
\label{sec:vdp_intro}

The simple harmonic oscillator is a Hamiltonian system with
$ H ( x _1 , x _2 ) = \frac{1}{2} ( x _1 ^2 + x _2 ^2 ) $, so that
\begin{alignat*}{2}
  \dot{ x } _1 &= \hphantom{-}\frac{ \partial H }{ \partial x _2 } &&= \hphantom{-}x _2 ,\\
  \dot{ x } _2 &= -\frac{ \partial H }{ \partial x _1 } &&= -x _1 .
\end{alignat*}
The \emph{Van~der~Pol oscillator} is the conditionally linear system
\begin{equation}
  \label{eqn:vdp}
  \begin{aligned}
    \dot{x} _1 &= x _2 ,\\
    \dot{x} _2 &= \epsilon ( 1 - x _1 ^2 ) x _2 - x _1 ,
  \end{aligned}
\end{equation}
which adds a nonlinear dissipation term proportional to the constant
parameter $\epsilon$ (\citet{VanDerPol1926}). Due to its dynamical
similarities with the more complex Hodgkin--Huxley model, the Van der
Pol oscillator has played an important role in simplified models of
neuronal spiking: specifically, it is the foundation for the
Fitzhugh--Nagumo model \citep{Fitzhugh1961,NaArYo1962,Izhikevich2007},
of which it is a special case.

We begin by discussing the non-stiff case, when $ \epsilon \ll 1 $.
If we transform into the ``action-angle'' coordinates
$ ( a, \theta ) $, defined by $ x _1 = \sqrt{ 2 a } \cos \theta $,
$ x _2 = \sqrt{ 2 a } \sin \theta $, then \eqref{eqn:vdp} becomes
\begin{align*}
  \dot{ a } &= \epsilon ( 1 - 2 a \cos ^2 \theta ) 2 a \sin ^2 \theta ,\\
  \dot{ \theta } &= - 1 + \epsilon ( 1 - 2 a \cos ^2 \theta ) \cos \theta \sin \theta .
\end{align*}
When $ \epsilon \ll 1 $, we have $ \dot{ \theta } \approx - 1 $. Since
$ a $ evolves much more slowly, one may obtain approximate dynamics by
averaging over
$ \theta \in \mathbb{T} \coloneqq \mathbb{R} / 2 \pi \mathbb{Z} $,
i.e., over one period of oscillation. Observe that $ \sin ^2 \theta $
and $ 4 \cos ^2 \theta \sin ^2 \theta = \sin ^2 ( 2 \theta ) $ both
have average $ \frac{1}{2} $, so the averaged dynamics are given by
\begin{equation*}
  \dot{ a } \approx \epsilon ( a - \tfrac{1}{2} a ^2 ) = \epsilon a ( 1 - \tfrac{1}{2} a ) .
\end{equation*} 
This has fixed points $ a = 0 , 2$. When $ \epsilon < 0 $, the fixed
point at $ a = 0 $ is stable and the one at $ a = 2 $ is unstable,
whereas when $ \epsilon > 0 $, the fixed point at $ a = 0 $ is
unstable and the one at $ a = 2 $ is stable. In the
$ ( x _1 , x _2 ) $-plane, these equilibria correspond to a fixed
point at the origin and a limit cycle given by the circle with radius
$2$ centered at the origin, and the bifurcation at $ \epsilon = 0 $ is
a Hopf bifurcation.

The stiff case, when $ \epsilon \gg 1 $, is most easily understood
after performing the change of variables $ y _1 = x _1 $,
$ y _2 = x _1 - x _1 ^3 / 3 - x _2 / \epsilon $, known as the
\emph{Li\'enard transformation} \citep{Lienard1928}. The system
\eqref{eqn:vdp} then becomes
\begin{align*}
  \dot{ y } _1 &= \epsilon ( y _1 - y _1 ^3 / 3 - y _2 ) ,\\
  \dot{ y } _2 &= - y _1 / \epsilon .
\end{align*}
The $ y _1 $-nullcline (i.e., where $ \dot{ y } _1 = 0 $) is given by
the cubic $ y _2 = y _1 - y _1 ^3 / 3 $. Since $ y _1 $ evolves much
more quickly than $ y _2 $, solutions are quickly attracted to the
cubic nullcline. They then move slowly along the nullcline until they
reach an extremum, at which point they fall off the nullcline and
quickly jump horizontally to the other branch of the nullcline. This
repeats periodically, describing the attractive limit cycle of the
stiff Van~der~Pol oscillator.

Reference solutions for the Van~der~Pol oscillator, in both the
non-stiff and stiff cases, are shown in \autoref{fig:vdp_reference}.

\begin{figure}
  \centering
  \includegraphics{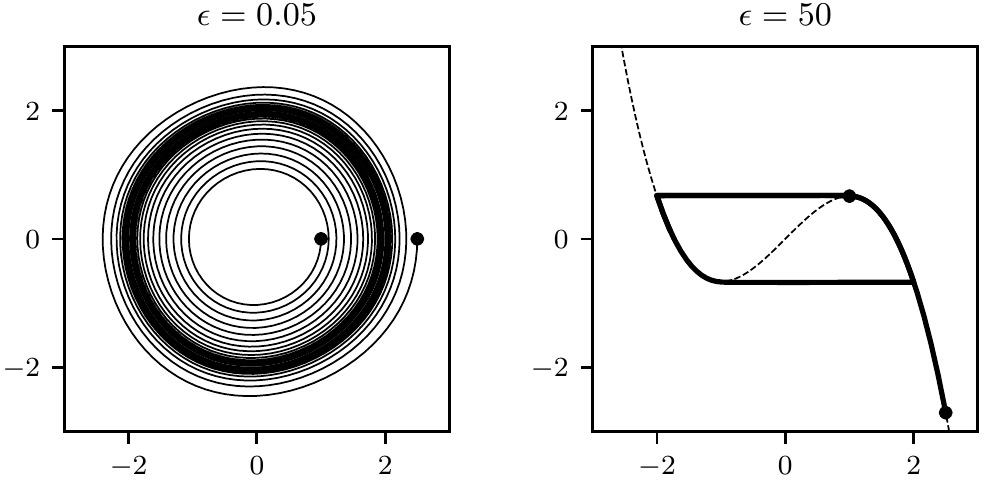}
  \caption{Reference solutions for the Van~der~Pol
    oscillator. \emph{Left:} When $ 0 < \epsilon \ll 1 $, solutions
    are attracted to a limit cycle with approximate radius $2$ in the
    $ ( x _1 , x _2 ) $-plane. \emph{Right:} When $ \epsilon \gg 1 $,
    the attractive limit cycle jumps between branches of the cubic
    nullcline (dashed line) in the $ ( y _1 , y _2 ) $-plane, given by
    the Li\'enard transformation
    $ ( y _1, y _2 ) = ( x _1, x _1 - x _1 ^3 / 3 - x _2 / \epsilon )
    $.\label{fig:vdp_reference}}
\end{figure}

\subsection{Limit cycle behavior of numerical methods (non-stiff
  case)}

We now analyze the limit cycle behavior of Euler-type methods, the
exponential midpoint method, and splitting/composition methods for
the Van~der~Pol oscillator in the non-stiff case $ \epsilon \ll 1
$. Following the approach of \citet{HaLu1999} (see also \citet[Chapter
XII]{HaLuWa2006}), we do so using \emph{backward error analysis}. That
is, we view a numerical method for the vector field $f$ as formally
the flow of a modified vector field
$ \widetilde{ f } = f + h f ^{ [1] } + h ^2 f ^{ [2] } + \cdots
$. This modified vector field is calculated by writing the numerical
method as $ x ^{ n + 1 } = \widetilde{ x } ( t ^n + h ) $, where
$ \widetilde{ x } $ formally solves the initial value problem
$ \dot{ \widetilde{ x } } = \widetilde{ f } ( \widetilde{ x } ) $,
$ \widetilde{ x } ( t ^n ) = x ^n $, and matching terms in the Taylor
expansion.

\begin{remark}
  By ``formally,'' we mean that $ \widetilde{ x } $ and
  $ \widetilde{ f } $ are formal power series, which may diverge.
  However, this formal procedure may be interpreted rigorously by
  truncating the asymptotic expansions and proving suitable error
  estimates; see \citet[Chapter IX]{HaLuWa2006}.
\end{remark}

\subsubsection{Euler-type methods}
Euler's method for the Van~der~Pol oscillator \eqref{eqn:vdp} is
\begin{align*}
  x _1 ^{ n + 1 } &= x _1 ^n + h x _2 ^n ,\\
  x _2 ^{ n + 1 } &= x _2 ^n + h \Bigl( \epsilon \bigl( 1 - (x _1 ^n)^2 \bigr) x _2 ^n - x _1 ^n \Bigr) .
\end{align*}
To calculate the modified vector field $ \widetilde{ f } $, we write
the modified system
\begin{align*}
  \dot{ \widetilde{ x } } _1 &= \widetilde{ x } _2  + h f _1 ^{ [1] } ( \widetilde{ x } ) + \mathcal{O} ( h ^2 ) ,\\
  \dot{ \widetilde{ x } } _2 &= \epsilon (  1 - \widetilde{ x } _1 ^2 ) \widetilde{ x } _2 - \widetilde{ x } _1 + h f ^{ [1] } _2 ( \widetilde{ x } ) + \mathcal{O} ( h ^2 ) .
\end{align*}
Taylor expanding the first component gives
\begin{align*}
  \widetilde{ x } _1 ( t ^n + h )
  &= \widetilde{ x } _1 ( t _n ) + h \dot{ \widetilde{ x } } _1 ( t _n ) + \frac{1}{2} h ^2 \ddot{ \widetilde{ x } } _1 ( t _n ) + \mathcal{O} (h ^3 )\\
  &= x _1 ^n + h \widetilde{ x } _2 + h ^2 \Bigl(  f _1 ^{ [1] } ( x ^n ) - \frac{1}{2} x _1 ^n + \mathcal{O} ( \epsilon ) \Bigr) + \mathcal{O} ( h ^3 ) .
\end{align*}
Likewise, for the second component,
\begin{equation*}
  \widetilde{ x } _2 ( t ^n + h ) = x _2 ^n + h \Bigl( \epsilon \bigl( 1 - (x _1 ^n ) ^2 \bigr) x _2 ^n - x _1 ^n \Bigr) + h ^2 \Bigl( f _2 ^{ [1] } ( x ^n ) - \frac{1}{2} x _2 ^n + \mathcal{O} (\epsilon) \Bigr) + \mathcal{O} ( h ^3 ) .
\end{equation*}
Matching terms with the expressions for $ x ^{ n + 1 } $ implies
$ f _1 ^{ [1] } ( \widetilde{ x } ) = \frac{1}{2} \widetilde{ x } _1 +
\mathcal{O} (\epsilon) $ and
$ f _2 ^{ [1] } ( \widetilde{ x } ) = \frac{1}{2} \widetilde{ x } _2 +
\mathcal{O} (\epsilon) $, so Euler's method is formally the flow of
the modified system
\begin{equation}
\begin{aligned}
  \dot{ \widetilde{ x } } _1 &= \widetilde{ x } _2 + \frac{1}{2} h \widetilde{ x } _1 +  \mathcal{O} ( h ^2 + \epsilon h ) ,\\
  \dot{ \widetilde{ x } } _2 &= \epsilon ( 1 - \widetilde{ x } _1 ^2 ) \widetilde{ x } _2 - \widetilde{ x } _1 + \frac{1}{2} h \widetilde{ x } _2 + \mathcal{O} ( h ^2 + \epsilon h ) .
\end{aligned}\label{eqn:eulerMVF}
\end{equation}
Transforming into action-angle coordinates and averaging over one
period of oscillation, as in \autoref{sec:vdp_intro}, gives
\begin{align*}
  \dot{ \widetilde{ a } } &\approx \epsilon ( \widetilde{ a } - \tfrac{1}{2} \widetilde{ a } ^2 ) + h \widetilde{ a } + \mathcal{O} ( h ^2 + \epsilon h ) \\
  &= \epsilon \widetilde{ a } ( 1 + \tfrac{h}{\epsilon} - \tfrac{1}{2} \widetilde{ a } ) + \mathcal{O} ( h ^2 + \epsilon h ) .
\end{align*}
This has an equilibrium at
$ \widetilde{ a } = 2 ( 1 + h / \epsilon ) $, so the corresponding
limit cycle in the
$ ( \widetilde{ x } _1 , \widetilde{ x } _2 ) $-plane is a circle of
radius $ 2 \sqrt{ 1 + h / \epsilon } $ centered at the origin.  This
gives a poor approximation of the true limit cycle, which has radius
$2$, unless $ h \ll \epsilon $. As noted in \autoref{sec:intro}, this
step size requirement is even more restrictive than that needed for
numerical stability. The foregoing argument appears in
\citet{HaLu1999} and in \citet[Chapter XII]{HaLuWa2006}.

We next show that \emph{all} Euler-type methods, including the
exponential Euler and SI Euler methods, share this poor limit cycle
behavior.

\begin{proposition}
  \label{prop:vdp_euler}
  For any Euler-type method applied to the Van~der~Pol oscillator, the
  modified vector field is given by
  \eqref{eqn:eulerMVF}. Consequently, numerical solutions have a limit
  cycle with approximate radius $ 2 \sqrt{ 1 + h / \epsilon } $ for
  $ \epsilon \ll 1 $.
\end{proposition}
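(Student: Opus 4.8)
The plan is to exploit the special structure of the Van~der~Pol oscillator \eqref{eqn:vdp} written in conditionally linear form. Reading off the coefficients, the first component has $a_1 \equiv 0$ and $b_1 = x_2$, while the second has $a_2 = \epsilon(1 - x_1^2)$ and $b_2 = -x_1$. The two features I would lean on are that $a_1$ vanishes identically and that $a_2 = \mathcal{O}(\epsilon)$ is small in the non-stiff regime. The strategy is to show that, for any choice of approximate exponentials $r_1, r_2$, the Euler-type update agrees with plain Euler's method up to an $\mathcal{O}(\epsilon h^2)$ error in the step map, so that the two share the same modified vector field to the order displayed in \eqref{eqn:eulerMVF}.

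First I would treat the first component. Because $a_1 \equiv 0$, the Euler-type update degenerates: using $r_1(0) = 1$ together with the removable-singularity value $\bigl(r_1(z) - 1\bigr)/z \to r_1'(0) = 1$ as $z \to 0$ (valid since $r_1(z) = 1 + z + \mathcal{O}(z^2)$), the update collapses to $x_1^{n+1} = x_1^n + h x_2^n$ regardless of which $r_1$ is chosen. That is, the first component is \emph{exactly} Euler's method for every Euler-type method. For the second component I would expand in the small parameter $h a_2 = \mathcal{O}(\epsilon h)$: writing $r_2(z) = 1 + z + \mathcal{O}(z^2)$ gives $r_2(h a_2) = 1 + h a_2 + \mathcal{O}(\epsilon^2 h^2)$ and $\bigl(r_2(h a_2) - 1\bigr)/(h a_2) = 1 + \mathcal{O}(\epsilon h)$. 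Substituting into the update formula and using $b_2 = -x_1 = \mathcal{O}(1)$, the method-dependent corrections appear as $\mathcal{O}(\epsilon^2 h^2) x_2^n$ and $\mathcal{O}(\epsilon h) \cdot h b_2$, both $\mathcal{O}(\epsilon h^2)$, so $x_2^{n+1}$ matches the plain Euler update up to $\mathcal{O}(\epsilon h^2)$.

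I would then assemble the backward error analysis. Together the two components show that an arbitrary Euler-type map differs from Euler's method only by $\mathcal{O}(\epsilon h^2)$ in the step $x^n \mapsto x^{n+1}$. Since the leading modified field $f^{[1]}$ is determined by the $\mathcal{O}(h^2)$ coefficient of the step (divided by $h$), and the vector field $f$ itself is common to both methods, this discrepancy can alter only the $\mathcal{O}(\epsilon)$ part of $f^{[1]}$, hence only the $\mathcal{O}(\epsilon h)$ contribution to $\widetilde{f}$. This is already inside the $\mathcal{O}(h^2 + \epsilon h)$ remainder of \eqref{eqn:eulerMVF}, so every Euler-type method has the same modified vector field \eqref{eqn:eulerMVF} as plain Euler to the stated order. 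The ``consequently'' clause then requires no new work: the action-angle averaging of \autoref{sec:vdp_intro}, already applied to \eqref{eqn:eulerMVF} in the preceding discussion, produces the equilibrium $\widetilde{a} = 2(1 + h/\epsilon)$ and the limit cycle of radius $2\sqrt{1 + h/\epsilon}$.

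The main obstacle I anticipate is the order bookkeeping in the middle step---specifically, verifying that because $a_2 = \mathcal{O}(\epsilon)$, the method-dependent quadratic coefficient in $r_2$ enters the modified vector field only at order $\mathcal{O}(\epsilon h)$, and so is swept into the error term of \eqref{eqn:eulerMVF} rather than producing a genuine $\mathcal{O}(h)$ correction that would distinguish one Euler-type method from another. Everything else is a direct consequence of the vanishing of $a_1$ and the smallness of $a_2$.
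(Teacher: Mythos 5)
Your proposal is correct and follows essentially the same route as the paper's proof: exploit $a_1 \equiv 0$ to show the first component is exactly Euler's method, expand $r_2(h a_2)$ and $\bigl(r_2(h a_2)-1\bigr)/(h a_2)$ using $a_2 = \mathcal{O}(\epsilon)$ to show the second component agrees with Euler up to $\mathcal{O}(\epsilon h^2)$, and absorb the resulting $\mathcal{O}(\epsilon h)$ perturbation of $f^{[1]}$ into the error term of \eqref{eqn:eulerMVF}. The order bookkeeping you flag as the main obstacle is handled exactly as the paper does it, so no gap remains.
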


\begin{proof}
  Since $ a _1 = 0 $, the first component of any Euler-type method
  agrees with Euler's method, i.e.,
  \begin{equation*}
    x _1 ^{ n + 1 } = x _1 ^n + h x _2 ^n .
  \end{equation*}
  For the second component, using
  $ r _2 (z) = 1 + z + \mathcal{O} ( z ^2 ) $ and
  $ \bigl( r _2 (z) - 1 \bigr) / z = 1 + \mathcal{O} (z) $, we have
  \begin{align*}
    x _2 ^{ n + 1 }
    &= \Bigl(  1 + h \epsilon \bigl( 1 - (x _1 ^n) ^2 \bigr) + \mathcal{O} ( \epsilon ^2 h ^2 ) \Bigr) x _2 ^n - h \bigl( 1 + \mathcal{O} ( \epsilon h ) \bigr) x _1 ^n \\
    &= x _2 ^n + h \Bigl( \epsilon \bigl( 1 - ( x _1 ^n ) ^2 \bigr) x _2 ^n - x _1 ^n \Bigr) + \mathcal{O} ( \epsilon h ^2 ) .
  \end{align*}
  Hence, $ f ^{ [1] } $ for an Euler-type method only differs from
  that for Euler's method by $ \mathcal{O} (\epsilon) $, which becomes
  $ \mathcal{O} ( \epsilon h ) $ in the modified vector
  field. However, this is just absorbed by the error term in
  \eqref{eqn:eulerMVF}.
\end{proof}

\subsubsection{The exponential midpoint method}

As with the Euler-type methods, we may also use backward error
analysis to analyze the limit cycle behavior of the exponential
midpoint method. Since the method is second-order, the first-order
term $ f ^{ [1] } $ in the modified vector field vanishes, and we have
\begin{align*}
  \dot{ \widetilde{ x } } _1 &= \widetilde{ x } _2  + h ^2 f _1 ^{ [2] } ( \widetilde{ x } ) + \mathcal{O} ( h ^3 ) ,\\
  \dot{ \widetilde{ x } } _2 &= \epsilon (  1 - \widetilde{ x } _1 ^2 ) \widetilde{ x } _2 - \widetilde{ x } _1 + h ^2 f ^{ [2] } _2 ( \widetilde{ x } ) + \mathcal{O} ( h ^3 ) .
\end{align*}
Na\"ively, one would expect these $ \mathcal{O} ( h ^2 ) $ errors to
result in an $ \mathcal{O} ( h ^2 / \epsilon ) $ error in the limit
cycle radius. However, Taylor expanding and matching terms yields,
after a calculation,
\begin{align*}
  \dot{ \widetilde{ x } } _1 &= \widetilde{ x } _2  + \frac{ 1 }{ 6 } h ^2 \widetilde{ x } _2 + \mathcal{O} ( h ^3 + \epsilon h ^2 )  ,\\
  \dot{ \widetilde{ x } } _2 &= \epsilon (  1 - \widetilde{ x } _1 ^2 ) \widetilde{ x } _2 - \widetilde{ x } _1 - \frac{ 1 }{ 6 } h ^2 \widetilde{ x } _1 + \mathcal{O} ( h ^3 + \epsilon h ^2 ) ,
\end{align*}
so these $ h ^2 $ terms actually cancel in
$ \dot{ \widetilde{ a } } = \widetilde{ x } _1 \dot{ \widetilde{ x } }
_1 + \widetilde{ x } _2 \dot{ \widetilde{ x } } _2 $. Therefore, we
need to compute out to $ f ^{ [3] } $ to obtain the leading-order
error term for the limit cycle radius.

\begin{proposition}
  \label{prop:vdp_expmid}
  For the exponential midpoint method applied to the Van~der~Pol
  oscillator, numerical solutions have a limit cycle with approximate
  radius $ 2 \sqrt{ 1 + h ^3 / ( 4 \epsilon ) } $ for
  $ \epsilon \ll 1 $.
\end{proposition}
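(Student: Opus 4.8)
The plan is to push the backward error analysis one order further, to $f^{[3]}$, and then rerun the action--angle averaging of \autoref{sec:vdp_intro} on the resulting modified vector field. Since the method has order $2$, we have $f^{[1]} = 0$, and as recorded just before the statement the $h^2$ terms contribute $\widetilde{x}_1\bigl(\tfrac16 h^2\widetilde{x}_2\bigr) + \widetilde{x}_2\bigl(-\tfrac16 h^2\widetilde{x}_1\bigr) = 0$ to $\dot{\widetilde{a}} = \widetilde{x}_1\dot{\widetilde{x}}_1 + \widetilde{x}_2\dot{\widetilde{x}}_2$. Hence the leading limit-cycle error must come from the $h^3$ term $\widetilde{x}\cdot f^{[3]}$, and I would isolate the part of it that survives averaging over $\theta$.

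The observation that makes this tractable is that the $\epsilon$-independent part of the modified vector field coincides with the modified vector field of the method applied to the harmonic oscillator. Indeed, writing $f = f_0 + \epsilon g$ with $f_0$ the harmonic oscillator, each $f^{[k]}$ is a polynomial in $\epsilon$, and setting $\epsilon = 0$ in \eqref{eqn:vdp} makes $a_1 = a_2 = 0$, so the exponential midpoint method reduces to the explicit midpoint method, with stability function $r(z) = 1 + z + \tfrac12 z^2$. On this linear, constant-coefficient problem the one-step map is exactly the matrix $r(hJ)$, $J = \bigl(\begin{smallmatrix} 0 & 1 \\ -1 & 0\end{smallmatrix}\bigr)$, and a direct computation gives $r(hJ)^{\!\top} r(hJ) = \bigl(1 + \tfrac14 h^4\bigr)I$. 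Thus the map inflates $\widetilde{x}_1^2 + \widetilde{x}_2^2 = 2\widetilde{a}$ by the factor $1 + \tfrac14 h^4$ per step, so the $\epsilon$-independent part of the modified flow increases $\widetilde{a}$ at the rate $\tfrac1h\log\bigl(1 + \tfrac14 h^4\bigr) = \tfrac14 h^3 + \mathcal{O}(h^7)$, i.e.\ it contributes the term $\tfrac14 h^3\widetilde{a}$ to $\dot{\widetilde{a}}$. This term is independent of $\theta$, so averaging leaves it unchanged.

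Combining with the averaged Van der Pol dissipation $\epsilon(\widetilde{a} - \tfrac12\widetilde{a}^2)$ from \autoref{sec:vdp_intro}, the modified averaged dynamics read $\dot{\widetilde{a}} \approx \epsilon\widetilde{a}\bigl(1 - \tfrac12\widetilde{a}\bigr) + \tfrac14 h^3\widetilde{a} = \epsilon\widetilde{a}\bigl(1 + \tfrac{h^3}{4\epsilon} - \tfrac12\widetilde{a}\bigr)$, whose nonzero equilibrium is $\widetilde{a} = 2\bigl(1 + h^3/(4\epsilon)\bigr)$. Since the radius in the $(\widetilde{x}_1, \widetilde{x}_2)$-plane is $\sqrt{2\widetilde{a}}$, this is a circle of radius $2\sqrt{1 + h^3/(4\epsilon)}$, as claimed.

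The main obstacle is not a single hard idea but the third-order bookkeeping together with the justification that the $\epsilon$-dependent corrections are negligible at leading order. Concretely, I would need to confirm via the Taylor matching that no other $\theta$-independent term of size $h^3$ arises, and---the delicate point---that the $\mathcal{O}(\epsilon h^2)$ terms in $f^{[2]}$ and the $\mathcal{O}(\epsilon)$ part of $f^{[3]}$ do not affect the leading radius: these should either average to zero by the same oddness-in-$\theta$ symmetry used in \autoref{sec:vdp_intro}, or perturb the equilibrium only at relative order $h^2$, which is subdominant to $h^3/\epsilon$ in the stiffness-limited regime $\epsilon \ll h$ where the correction is significant. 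As in the Euler-type case, the averaging and the truncation of the formal modified series would be justified rigorously through the estimates of \citet[Chapters IX and XII]{HaLuWa2006}.
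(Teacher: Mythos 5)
Your proposal is correct and arrives at exactly the right averaged equation $\dot{\widetilde{a}} \approx \epsilon\widetilde{a}(1 - \tfrac12\widetilde{a}) + \tfrac14 h^3\widetilde{a}$, but it obtains the crucial $\tfrac14 h^3\widetilde{a}$ coefficient by a genuinely different route. The paper's proof computes $f^{[3]}$ directly by Taylor matching (``after a calculation''), finding the modified vector field $\dot{\widetilde{x}}_1 = \widetilde{x}_2 + \tfrac16 h^2\widetilde{x}_2 + \tfrac18 h^3\widetilde{x}_1 + \mathcal{O}(h^4 + \epsilon h^2)$ and its counterpart for $\widetilde{x}_2$, and then observes that the $h^3$ terms contribute $\tfrac18 h^3\,\lvert\widetilde{x}\rvert^2 = \tfrac14 h^3\widetilde{a}$ to $\dot{\widetilde{a}}$. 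You instead exploit the fact that at $\epsilon = 0$ the method is exactly the linear map $r(hJ)$ with $r(z) = 1 + z + \tfrac12 z^2$, and read off the radial growth from $r(hJ)^{\top}r(hJ) = (1 + \tfrac14 h^4)I$; your $\tfrac1h\log(1+\tfrac14 h^4) = \tfrac14 h^3 + \mathcal{O}(h^7)$ agrees with the paper's coefficient (indeed the full matrix logarithm $\tfrac1h\log r(hJ)$ reproduces the paper's $\tfrac16 h^2$ and $\tfrac18 h^3$ terms as well). Your route is cleaner and less error-prone---it replaces third-order Taylor bookkeeping with a two-line matrix identity and even gives the $\epsilon$-independent radial drift to all orders in $h$---at the cost of applying only to the $\epsilon = 0$ part, so you still must argue (as you do) that the $\mathcal{O}(\epsilon h^2)$ corrections are subdominant. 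The paper is no more rigorous on that last point: it simply carries an $\mathcal{O}(h^4 + \epsilon h^2)$ error term through the averaging and drops it when locating the equilibrium, so your explicit acknowledgment of the regime $\epsilon \ll h$ where the $h^3/\epsilon$ correction dominates is, if anything, a more honest accounting than the original.
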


\begin{proof}
  Taylor expanding and matching terms to compute $ f ^{[3]} $ gives
  \begin{align*}
    \dot{ \widetilde{ x } } _1 &= \widetilde{ x } _2  + \frac{ 1 }{ 6 } h ^2 \widetilde{ x } _2 + \frac{ 1 }{ 8 } h ^3 \widetilde{ x } _1 + \mathcal{O} ( h ^4 + \epsilon h ^2 )  ,\\
    \dot{ \widetilde{ x } } _2 &= \epsilon (  1 - \widetilde{ x } _1 ^2 ) \widetilde{ x } _2 - \widetilde{ x } _1 - \frac{ 1 }{ 6 } h ^2 \widetilde{ x } _1 + \frac{ 1 }{ 8 } h ^3 \widetilde{ x } _2 + \mathcal{O} ( h ^4 + \epsilon h ^2 ) ,
  \end{align*}
  Transforming into action-angle coordinates, the $ h ^2 $ terms
  cancel in
  $ \dot{ \widetilde{ a } } = \widetilde{ x } _1 \dot{ \widetilde{ x }
  } _1 + \widetilde{ x } _2 \dot{ \widetilde{ x } } _2 $, as noted
  above, while the $ h ^3 $ terms become
  $ \frac{ 1 }{ 8 } h ^3 ( \widetilde{ x } _1 ^2 + \widetilde{ x } _2
  ^2 ) = \frac{ 1 }{ 4 } h ^3 \widetilde{ a } $. Therefore, averaging
  over one period of oscillation gives
  \begin{equation*}
    \dot{ \widetilde{ a } } \approx \epsilon \widetilde{ a } ( 1 - \tfrac{1}{2} \widetilde{ a } ) + \tfrac{ 1 }{ 4 } h ^3 \widetilde{ a } = \epsilon \widetilde{ a } ( 1 + \tfrac{ h ^3 }{ 4 \epsilon } - \tfrac{1}{2} \widetilde{ a }  ),
  \end{equation*}
  which has an equilibrium at
  $ \widetilde{ a } = 2 \bigl( 1 + h ^3 / (4\epsilon)\bigr) $. The
  corresponding limit cycle in the
  $ ( \widetilde{ x } _1 , \widetilde{ x } _2 ) $-plane is a circle of
  radius $ 2 \sqrt{ 1 + h ^3 / (4 \epsilon) } $ centered at the
  origin.
\end{proof}

It follows that, in order to obtain a good approximation of the limit
cycle using the exponential midpoint method, we require
$ h ^3 \ll \epsilon $. This allows for larger time steps than
Euler-type methods, which require $ h \ll \epsilon $, but still we
cannot choose $h$ independently of $\epsilon$.

\subsubsection{Splitting and composition methods}
We next examine the limit cycle behavior of splitting and composition
methods for the Van~der~Pol oscillator. Instead of explicitly
computing the modified vector field, we exploit the fact that the
modified vector field is Hamiltonian when $ \epsilon = 0 $. This is an
application of a general argument for perturbed Hamiltonian systems
due to \citet{Stoffer1998,HaLu1999}. We remark that those authors were
primarily considering symplectic integrators, such as symplectic
(partitioned) Runge--Kutta methods, which are symplectic when applied
to \emph{any} Hamiltonian system. Although the splitting and
composition methods we consider are not symplectic in this more
general sense---they are generally non-symplectic for non-separable
Hamiltonian systems---the argument only requires that the modified
vector field be Hamiltonian when $ \epsilon = 0 $, which it is in this
case.

When $ \epsilon = 0 $, the Van~der~Pol oscillator reduces to the
simple harmonic oscillator. In this case, the splitting
$ f = f ^{ (1) } + f ^{ (2) } $ corresponds to the \emph{Hamiltonian
  splitting} $ H = H ^{ (1) } + H ^{ ( 2 ) } $, i.e., $ f ^{ (i) } $
is the Hamiltonian vector field for
$ H ^{ (i) } (x) = \frac{1}{2} x _i ^2 $. Since $ a _i = 0 $, any
approximate flow $ \Phi _h ^{ (i) } $ coincides with the exact flow
$ \varphi _h ^{ (i) } $, so for the simple harmonic oscillator, every
composition method is just a splitting method.

Since vector fields form a Lie algebra with the Jacobi--Lie bracket,
the modified vector field $ \widetilde{ f } $ for the non-symmetric
splitting method $ \varphi _h ^{ (1) } \circ \varphi _h ^{ (2) } $ may
be computed by applying the Baker--Campbell--Hausdorff formula to the
vector fields $ f ^{ (1) } , f ^{ (2) } $. Moreover, since Hamiltonian
vector fields are closed with respect to the Jacobi--Lie bracket
(i.e., they form a Lie subalgebra),
$ \widetilde{ f } = f + h f ^{ [1] } + \cdots $ is formally the
Hamiltonian vector field of a modified Hamiltonian
$ \widetilde{ H } = H + h H ^{ [1] } + \cdots $. In fact, the modified
Hamiltonian can itself be computed using the
Baker--Campbell--Hausdorff formula, by applying it to
$ H ^{ (1) } , H ^{ (2) } $ with the Poisson bracket; this works
because the Lie algebra of Hamiltonian vector fields with the
Jacobi--Lie bracket is isomorphic to that of Hamiltonian functions
with the Poisson bracket.

This approach is due to \citet{Yoshida1993}, and it may be generalized
to show that any Hamiltonian splitting method, including symmetric and
higher-order splittings, has a modified vector field that is again
Hamiltonian. Moreover, when the splitting method has order $p$, we
have $ \widetilde{ f } = f + \mathcal{O} ( h ^p ) $ and thus
$ \widetilde{ H } = H + \mathcal{O} ( h ^p ) $ (\citet[Theorem
IX.1.2]{HaLuWa2006}).

\begin{proposition}
  \label{prop:vdp_composition}
  Suppose we apply an order-$p$ composition method, based on the
  splitting $ f = f ^{ (1) } + f ^{ (2) } $, to the Van~der~Pol
  oscillator. Then numerical solutions have a limit cycle with
  approximate radius $ 2 + \mathcal{O} ( h ^p ) $ for
  $ \epsilon \ll 1 $. More generally, for $\epsilon$ sufficiently
  small (but not necessarily $ \ll 1 $), the numerical limit cycle is
  within $ \mathcal{O} ( h ^p ) $ of the exact limit cycle.
\end{proposition}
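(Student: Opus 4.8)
The plan is to avoid computing the modified vector field $\tilde f$ explicitly, and instead to exploit its structure as an $\epsilon$-perturbation of a Hamiltonian field, mirroring the exact averaging analysis of \autoref{sec:vdp_intro}. First I would write the Van~der~Pol field as $f = f_0 + \epsilon g$, where $f_0 (x) = ( x_2, -x_1 )$ is the harmonic oscillator---the Hamiltonian vector field of $H = \tfrac12 ( x_1^2 + x_2^2 )$---and $\epsilon g (x) = \bigl( 0, \epsilon ( 1 - x_1^2 ) x_2 \bigr)$ is the dissipative perturbation. Because the modified vector field of any composition method is a formal $h$-series whose coefficients are iterated Jacobi--Lie brackets of $f^{(1)}, f^{(2)}$, it depends smoothly (indeed polynomially) on $\epsilon$, so I would expand $\tilde f = \tilde f_0 + \epsilon \tilde f_1 + \mathcal O ( \epsilon^2 )$. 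The $\epsilon^0$ term $\tilde f_0$ is exactly the modified vector field produced by the method applied to the harmonic oscillator, so by the Hamiltonian-splitting result quoted above (\citet{Yoshida1993}; \citet[Thm.~IX.1.2]{HaLuWa2006}) it is Hamiltonian, with modified Hamiltonian $\tilde H = H + \mathcal O ( h^p )$.

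Next I would pass to action--angle coordinates $( \tilde a, \tilde\theta )$ adapted to $\tilde H$, in which the $\tilde f_0$-flow reads $\dot{\tilde a} = 0$. The crucial structural payoff is that, since $\tilde f_0$ exactly preserves the action $\tilde a$, the $\epsilon^0$ contribution to $\dot{\tilde a}$ vanishes identically, leaving $\dot{\tilde a} = \epsilon\, [ \text{radial part of } \tilde f_1 ] + \mathcal O ( \epsilon^2 )$. Matching $\epsilon$-coefficients in $\tilde f = f + \mathcal O ( h^p )$ forces $\tilde f_1 = g + \mathcal O ( h^p )$, and since the coordinate change differs from the harmonic one by $\mathcal O ( h^p )$, averaging over $\tilde\theta$ (exactly as in \autoref{sec:vdp_intro}) yields $\dot{\tilde a} \approx \epsilon \bigl( \tilde a ( 1 - \tfrac12 \tilde a ) + \mathcal O ( h^p ) \bigr)$. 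The bracketed expression has a transverse zero near $\tilde a = 2$, where its derivative is $1 - \tilde a = -1 \neq 0$, so the implicit function theorem locates the equilibrium at $\tilde a = 2 + \mathcal O ( h^p )$, giving a limit-cycle radius $\sqrt{ 2 \tilde a } = 2 + \mathcal O ( h^p )$. This proves the first claim.

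For the more general statement I would drop the averaging approximation and instead compare Poincar\'e return maps for $\tilde a$ directly, which is the setting of the perturbed-Hamiltonian argument of \citet{Stoffer1998, HaLu1999}. For every sufficiently small $\epsilon > 0$ the exact limit cycle is hyperbolic, so its radial return map has a transverse fixed point $a^\ast ( \epsilon )$. Because both $f_0$ and $\tilde f_0$ are Hamiltonian, the unperturbed orbits (level sets of $H$ and of $\tilde H$, respectively) are closed, so the one-period radial displacement carries an explicit factor of $\epsilon$ in both the exact and the modified systems; their displacement functions therefore agree to within $\mathcal O ( h^p )$ once this common factor is divided out. Transversality then places the numerical fixed point within $\mathcal O ( h^p )$ of $a^\ast ( \epsilon )$.

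The hard part will be controlling uniformity in $\epsilon$ as $\epsilon \to 0$: I expect the essential mechanism is precisely that the Hamiltonian structure of $\tilde f_0$ confines the discretization error to the $\mathcal O ( \epsilon )$ (dissipative) part of the dynamics, so that it competes with an $\mathcal O ( \epsilon )$ rate of attraction to produce an $\mathcal O ( h^p )$---rather than $\mathcal O ( h^p / \epsilon )$---displacement. This is exactly the mechanism that fails for the Euler-type methods of \autoref{prop:vdp_euler}, whose non-Hamiltonian $\mathcal O ( h )$ error term already drives radial drift at $\epsilon = 0$; verifying that no such $\epsilon$-independent drift survives here is the crux of the estimate.
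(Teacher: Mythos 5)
Your proposal is correct and follows essentially the same route as the paper: both arguments rest on the observation that the $\epsilon = 0$ part of the modified vector field is Hamiltonian (with $\widetilde{H} = H + \mathcal{O}(h^p)$), so that the $\epsilon$-independent discretization terms contribute nothing to the averaged radial dynamics, leaving $\dot{\widetilde{a}} \approx \epsilon \widetilde{a}\bigl(1 + \mathcal{O}(h^p) - \tfrac{1}{2}\widetilde{a}\bigr)$ and hence a limit cycle of radius $2 + \mathcal{O}(h^p)$, with the general case deferred to \citet{Stoffer1998,HaLu1999}. The only cosmetic difference is that you work in action--angle coordinates adapted to $\widetilde{H}$ so the conservative drift vanishes identically, whereas the paper keeps the coordinates adapted to $H$ and kills that term via $\int_{\mathbb{T}} (\partial\widetilde{H}/\partial\widetilde{\theta})\,\mathrm{d}\widetilde{\theta} = 0$.
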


\begin{proof}
  Since the method has order $p$, it is formally the flow of the modified system
  \begin{align*}
    \dot{ \widetilde{ x } } _1 &= \widetilde{ x } _2 + \mathcal{O} ( h ^p ) ,\\
    \dot{ \widetilde{ x } } _2 &=  \epsilon ( 1 - \widetilde{ x } _1 ^2 ) \widetilde{ x } _2 - \widetilde{ x } _1 + \mathcal{O} ( h ^p ) .
  \end{align*}
  When $ \epsilon = 0 $, this is formally the flow of a modified
  Hamiltonian
  $ \widetilde{ H } ( x _1 , x _2 ) = \frac{1}{2} ( x _1 ^2 + x _2 ^2
  ) + \mathcal{O} ( h ^p ) $. Now, transforming into action-angle
  variables gives
  \begin{align*}
    \dot{ \widetilde{ a } } &= \epsilon ( 1 - 2 \widetilde{ a } \cos ^2 \widetilde{ \theta } ) 2 \widetilde{ a } \sin ^2 \widetilde{ \theta } + \mathcal{O} ( h ^p ) ,\\
    \dot{ \widetilde{ \theta } } &= - 1 + \epsilon ( 1 - 2 \widetilde{ a } \cos ^2 \widetilde{ \theta } ) \cos \widetilde{ \theta } \sin \widetilde{ \theta } + \mathcal{O} ( h ^p ) .
  \end{align*}
  Since the transformation $ ( x _1 , x _2 ) \mapsto ( a, \theta ) $
  is symplectic, it follows that the transformed flow is also
  Hamiltonian when $ \epsilon = 0 $, with
  $ \widetilde{ H } (a, \theta) = a + \mathcal{O} ( h ^p ) $.

  Now, this modified Hamiltonian flow contains all the terms not
  involving $\epsilon$, so we may write
  \begin{equation*}
    \dot{ \widetilde{ a } } = \frac{ \partial \widetilde{ H } }{ \partial \widetilde{ \theta } } + \epsilon ( 1 - 2 \widetilde{ a } \cos ^2 \widetilde{ \theta } ) 2 \widetilde{ a } \sin ^2 \widetilde{ \theta } + \mathcal{O} ( \epsilon h ^p ) .
  \end{equation*}
  However,
  $ \int _{ \mathbb{T} } (\partial \widetilde{ H } / \partial
  \widetilde{ \theta }) \,\mathrm{d} \widetilde{ \theta } = 0 $, so
  the terms not involving $\epsilon$ drop out when averaging over
  $ \theta \in \mathbb{T} $. This leaves the averaged dynamics
  \begin{equation*}
    \dot{ \widetilde{ a } } \approx \epsilon \widetilde{ a } ( 1 - \tfrac{1}{2} \widetilde{ a } ) + \mathcal{O} ( \epsilon h ^p ) = \epsilon \widetilde{ a } \bigl( 1 + \mathcal{O} ( h ^p ) - \tfrac{1}{2} \widetilde{ a } \bigr),
  \end{equation*} 
  which has an equilibrium at
  $ \widetilde{ a } = 2 + \mathcal{O} ( h ^p ) $, corresponding to a
  limit cycle with radius
  $ \sqrt{ 2 \widetilde{a} } = \sqrt{ 4 + \mathcal{O} ( h ^p ) } = 2 + \mathcal{O}
  ( h ^p ) $.

  Finally, the more general statement that the limit cycle is
  preserved up to $ \mathcal{O} ( h ^p ) $ follows from an argument of
  \citet{Stoffer1998,HaLu1999} (see also \citet[Theorem
  XII.5.2]{HaLuWa2006}), which accounts for the averaging
  approximation error when $\epsilon$ is sufficiently small but not
  necessarily $ \ll 1 $.
\end{proof}

\begin{corollary}
  \label{cor:vdp_composition}
  Consider the non-symmetric and symmetric composition methods of
  \autoref{sec:splitting} applied to the Van~der~Pol oscillator. For
  $\epsilon$ sufficiently small, the non-symmetric methods preserve
  the limit cycle up to $ \mathcal{O} ( h ) $, while the symmetric
  methods preserve it up to $ \mathcal{O} ( h ^2 ) $. In particular,
  when $ \epsilon \ll 1 $, the radius of the numerical limit cycle is
  $ 2 + \mathcal{O} (h) $ for the non-symmetric methods and
  $ 2 + \mathcal{O} ( h ^2 ) $ for the symmetric methods.
\end{corollary}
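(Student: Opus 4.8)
The plan is to derive this as an immediate consequence of \autoref{prop:vdp_composition}, whose hypotheses require only that the method have a definite order $p$ and be based on the two-way splitting $ f = f ^{ (1) } + f ^{ (2) } $. First I would recall, from the general splitting/composition theory reviewed in \autoref{sec:splitting}, that the non-symmetric composition method $ \Phi _h ^{ (1) } \circ \Phi _h ^{ (2) } $ has order $ p = 1 $ (it is the composition-method generalization of the Lie--Trotter splitting), while the symmetric composition method $ \Phi _{ h / 2 } ^{ (2) \ast } \circ \Phi _{ h / 2 } ^{ (1) \ast } \circ \Phi _{ h / 2 } ^{ (1) } \circ \Phi _{ h / 2 } ^{ (2) } $ is symmetric and therefore has order $ p = 2 $. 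Crucially, these orders hold independently of $ \epsilon $, since they follow purely from the composition structure and not from any special feature of the Van~der~Pol vector field.

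With the orders in hand, I would simply substitute into \autoref{prop:vdp_composition}. Taking $ p = 1 $ yields that the non-symmetric methods preserve the limit cycle up to $ \mathcal{O} (h) $ for $ \epsilon $ sufficiently small, and taking $ p = 2 $ yields that the symmetric methods preserve it up to $ \mathcal{O} ( h ^2 ) $. The specialization to $ \epsilon \ll 1 $, giving numerical limit-cycle radii $ 2 + \mathcal{O} (h) $ and $ 2 + \mathcal{O} ( h ^2 ) $ respectively, is read off directly from the corresponding statement of the proposition.

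There is essentially no real obstacle here, since the substantive work---establishing that the modified vector field is Hamiltonian at $ \epsilon = 0 $ and that the averaging argument controls the limit-cycle radius to order $ h ^p $---is already carried out in the proof of \autoref{prop:vdp_composition}. The only point requiring care is to confirm that the two methods of \autoref{sec:splitting}, when applied to \eqref{eqn:vdp} with $ {d} = 2 $, genuinely fall under the proposition's hypotheses: they are composition methods based on the splitting $ f = f ^{ (1) } + f ^{ (2) } $, with $ a _1 = 0 $ and $ a _2 = \epsilon ( 1 - x _1 ^2 ) $, so that at $ \epsilon = 0 $ the system reduces to the simple harmonic oscillator and every approximate flow coincides with the corresponding exact flow. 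This verification is immediate, and the conclusion then follows.
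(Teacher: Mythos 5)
Your proposal is correct and matches the paper's (implicit) argument exactly: the corollary is obtained by substituting $p=1$ for the non-symmetric and $p=2$ for the symmetric composition methods into \autoref{prop:vdp_composition}, after checking that these methods satisfy its hypotheses. The paper leaves this routine specialization unstated, and your verification that the $d=2$ splitting of \eqref{eqn:vdp} falls under the proposition is the right point to make explicit.
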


These results say that splitting/composition methods accurately
preserve the limit cycle of the Van~der~Pol oscillator when
$ h \ll 1 $. This allows for much larger step sizes than Euler-type
methods, which require $ h \ll \epsilon $, or the exponential midpoint
method, which requires $ h ^3 \ll \epsilon $.

\subsection{Numerical experiments}

In this section, we show the results of numerical experiments for the
methods of the previous section applied to the Van~der~Pol
oscillator. In the non-stiff case, we observe superior numerical limit
cycle preservation at large time steps for the splitting/composition
methods, compared to the Euler-type methods and (to a lesser extent)
the exponential midpoint method, which is consistent with the
theoretical results of the previous section. In the stiff case, the
splitting methods preserve the limit cycle behavior best, followed by
the composition methods and exponential midpoint method, with the
Euler-type methods performing worst, although we do not yet have a
theoretical explanation for this.

\subsubsection{Non-stiff Van~der~Pol oscillator}

\begin{figure}
  \centering
  \includegraphics[width=\textwidth]{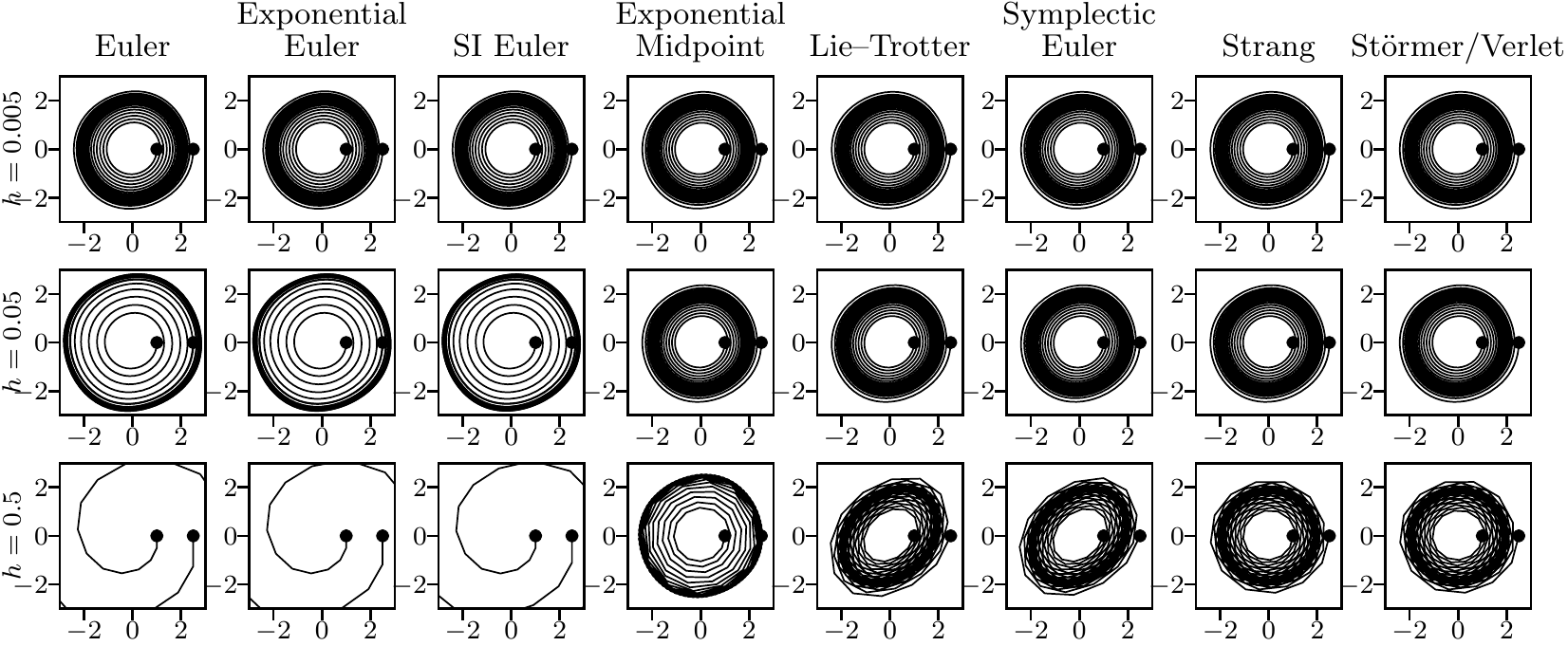}
  \caption{Numerical limit-cycle behavior ($ x _2 $ vs.~$ x _1 $) for
    the Van~der~Pol oscillator with $ \epsilon = 0.05 $. For the
    Euler-type methods, the limit cycle radius grows rapidly with $h$,
    consistent with \autoref{prop:vdp_euler}. For the exponential
    midpoint method, this growth is negligible at smaller time steps
    but is clearly visible at $ h = 0.5 $, consistent with
    \autoref{prop:vdp_expmid}. The splitting and composition methods
    exhibit much better limit-cycle preservation, consistent with
    \autoref{cor:vdp_composition}; some skewing is visible for the
    first-order, non-symmetric Lie--Trotter and symplectic Euler
    methods.\label{fig:vdp_nonstiff}}
\end{figure}

\autoref{fig:vdp_nonstiff} shows phase portraits for three Euler-type
methods (Euler, exponential Euler, and SI Euler), the exponential
midpoint method, and four splitting/composition methods (Lie--Trotter
splitting, symplectic Euler, Strang splitting, and St\"ormer/Verlet),
applied to the Van~der~Pol oscillator with $ \epsilon = 0.05 $. For
the Euler-type methods, the numerical limit cycle radius is seen to
grow with $h/\epsilon$, which is consistent with
\autoref{prop:vdp_euler}. For the exponential midpoint method, the
$ h ^3 / (4 \epsilon ) $ growth in limit cycle radius is less apparent
at smaller time steps but is clearly visible at $ h = 0.5 $, for
which $ h ^3 / ( 4 \epsilon ) = 0.625 $, consistent with
\autoref{prop:vdp_euler}. By contrast, for the non-symmetric and
symmetric splitting and composition methods, there is no apparent
growth in limit cycle radius---even when $ h = 0.5 $, for which
$ h/\epsilon = 10 $---which is consistent with
\autoref{cor:vdp_composition}. Notice that the asymmetry and lower
order of the Lie--Trotter splitting and symplectic Euler methods
manifests as a skewing of the limit cycle for large $h$, while the
limit cycle remains approximately circular for the symmetric methods.

\begin{figure}
  \centering
  \includegraphics{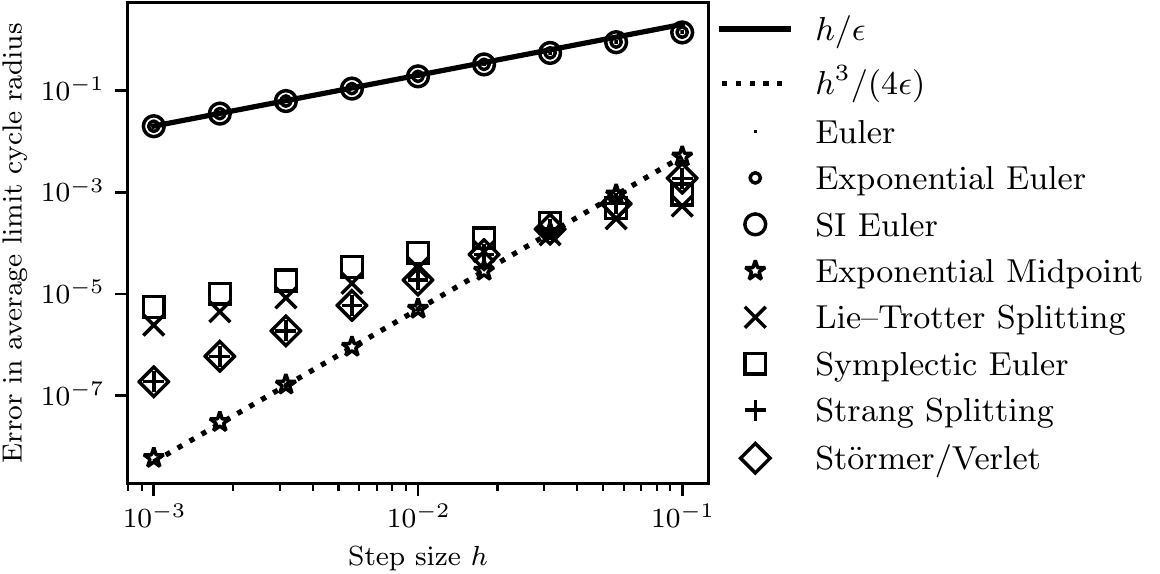}
  \caption{Error in average limit cycle radius for the Van~der~Pol
    oscillator with $ \epsilon = 0.05 $ and various step sizes $h$. We
    see that the Euler-type methods have error $ \sim h / \epsilon $,
    the exponential midpoint method has error
    $ \sim h ^3 / (4 \epsilon ) $, the non-symmetric
    splitting/composition methods have error $ \mathcal{O} (h) $, and
    the symmetric splitting/composition methods have error
    $ \mathcal{O} ( h ^2 ) $, consistent with the theoretical
    results.\label{fig:vdp_limit_cycle_convergence}}
\end{figure}

\autoref{fig:vdp_limit_cycle_convergence} illustrates how the average
limit cycle radius converges, for each of these methods, as
$ h \rightarrow 0 $. For the Euler-type methods, following
\autoref{prop:vdp_euler}, the error in average limit cycle radius is
$ \sim h / \epsilon $.  For the exponential midpoint method, following
\autoref{prop:vdp_expmid}, the error is $ \sim h ^3 / (4 \epsilon ) $.
For the splitting and composition methods, following
\autoref{cor:vdp_composition}, the error is $ \mathcal{O} (h) $ for
the non-symmetric methods and $ \mathcal{O} ( h ^2 ) $ for the
symmetric methods.

Although the exponential midpoint method has the lowest error for
small time steps, we make two remarks about this. First, this occurs
because $ h ^3 / \epsilon \ll h ^2 $ when $ h \ll \epsilon $, which is
more difficult to achieve when $\epsilon$ is even smaller. Second,
since it requires twice as many function evaluations as the other
methods, using the same computational budget would require time steps
twice as large, roughly increasing the error by a constant factor of
$8$.

\subsubsection{Stiff Van~der~Pol oscillator}
\label{sec:vdp_stiff}

We next apply these methods to the Van~der~Pol oscillator with
$ \epsilon = 50 $. In contrast with the non-stiff case, we see notably
different behavior among the three Euler-type methods, as well as
between the splitting and composition methods of the same order.

\begin{figure}
  \centering
  \includegraphics[width=\textwidth]{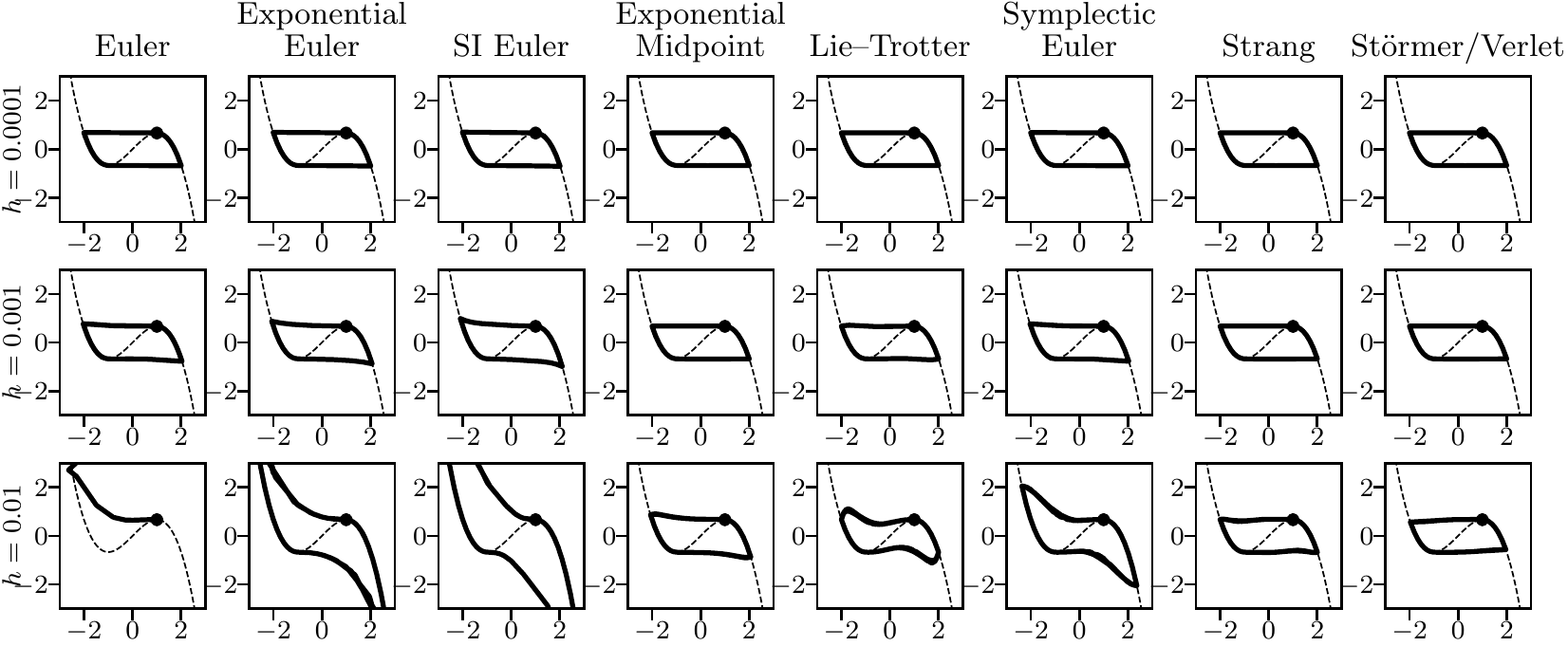}
  \caption{Numerical limit-cycle behavior ($ y _2 $ vs.~$ y _1 $,
    cubic nullcline shown as dashed line) for the Van~der~Pol
    oscillator with $ \epsilon = 50 $. As the time step size grows,
    the exponential midpoint and splitting/composition methods exhibit
    substantially less limit cycle distortion than the Euler-type
    methods (while Euler's method itself becomes unstable), and the
    Lie--Trotter splitting method performs notably better than the symplectic
    Euler composition method of the same order. \label{fig:vdp_stiff}}
\end{figure}

\autoref{fig:vdp_stiff} shows numerical phase plots in the
$ ( y _1 , y _2 ) $ plane defined by the Li\'enard transformation
introduced in \autoref{sec:vdp_intro}. For $ h = 0.0001 $, all methods
show numerical limit cycles resembling the reference solution in
\autoref{fig:vdp_reference}, with solutions ``jumping'' between
branches of the cubic nullcline approximately horizontally at the
critical values $ y _2 = \pm 2/3 $.  For the Euler-type methods,
rather than remaining approximately horizontal as $h$ grows, these
jumps grow in the direction of increasing $ \lvert y _2 \rvert $,
resulting in instability for Euler's method at $ h = 0.01 $ and severe
limit cycle distortion for the exponential Euler and SI Euler
methods. The symplectic Euler method exhibits similar behavior, albeit
less severely. Although the Lie--Trotter splitting method also shows
limit cycle distortion, the jumps oscillate and return to the
nullcline at nearly the correct points. The exponential midpoint and
St\"ormer/Verlet methods both exhibit much less limit cycle
distortion, with $ \lvert y _2 \rvert $ increasing slightly during
jumps for the exponential midpoint method and \emph{decreasing}
slightly for the St\"ormer/Verlet method. The Strang splitting method
also exhibits very little distortion and, like Lie--Trotter, appears
to oscillate and return to the nullcline at nearly the correct point.

\begin{table}
  \centering
  \begin{tabular}{r|rrr|rrr}
    & \multicolumn{3}{c|}{$ \lvert y _1 \rvert $} & \multicolumn{3}{c}{$ \lvert y _2 \rvert $} \\
    $h$ & 0.0001 & 0.001 & 0.01 & 0.0001 & 0.001 & 0.01 \\
    \hline
    Euler & 2.01 & 2.03 & ------ & 0.68 & 0.77 & ------ \\
    Exponential Euler & 2.01 & 2.07 & 3.18 & 0.69 & 0.88 & 7.52 \\
    SI Euler & 2.01 & 2.10 & 4.34 & 0.70 & 0.99 & 22.82 \\
    Exponential Midpoint & 2.00 & 2.00 & 2.07 & 0.68 & 0.68 & 0.87 \\
    Lie--Trotter & 2.00 & 2.00 & 2.00 & 0.68 & 0.68 & 0.68 \\
    Symplectic Euler & 2.01 & 2.03 & 2.37 & 0.68 & 0.77 & 2.06 \\
    Strang & 2.00 & 2.00 & 2.00 & 0.68 & 0.68 & 0.68 \\
    St\"ormer/Verlet & 2.00 & 2.00 & 1.97 & 0.68 & 0.67 & 0.57
  \end{tabular}
  \caption{Values of $ \lvert y _1 \rvert $, $ \lvert y _2 \rvert $ at
    which numerical solutions return to the cubic nullcline after
    jumping, for the Van~der~Pol oscillator with $\epsilon = 50
    $. These values increase with $h$ for the Euler-type methods,
    increase more modestly for the symplectic Euler method, increase
    slightly for the exponential midpoint method, and decrease
    slightly for the St\"ormer/Verlet method. The Lie--Trotter and
    Strang splitting methods exhibit no drift in these values at the
    level of precision shown.\label{tab:vdp_jumps}}
\end{table}

\autoref{tab:vdp_jumps} quantifies the observations made in the
previous paragraph by showing the values of $ \lvert y _1 \rvert $,
$ \lvert y _2 \rvert $ at which the jumps return to the
nullcline. This is computed by finding the point $ ( y _1 , y _2 ) $
along each numerical solution at which $ \lvert y _1 \rvert $ attains
a maximum. For the Euler-type, exponential midpoint, and symplectic
Euler methods, these values increase as $h$ increases, while
St\"ormer/Verlet shows a small decrease. By contrast, the two
splitting methods do not show any drift at the precision displayed.

\begin{figure}
  \centering
  \includegraphics[width=\textwidth]{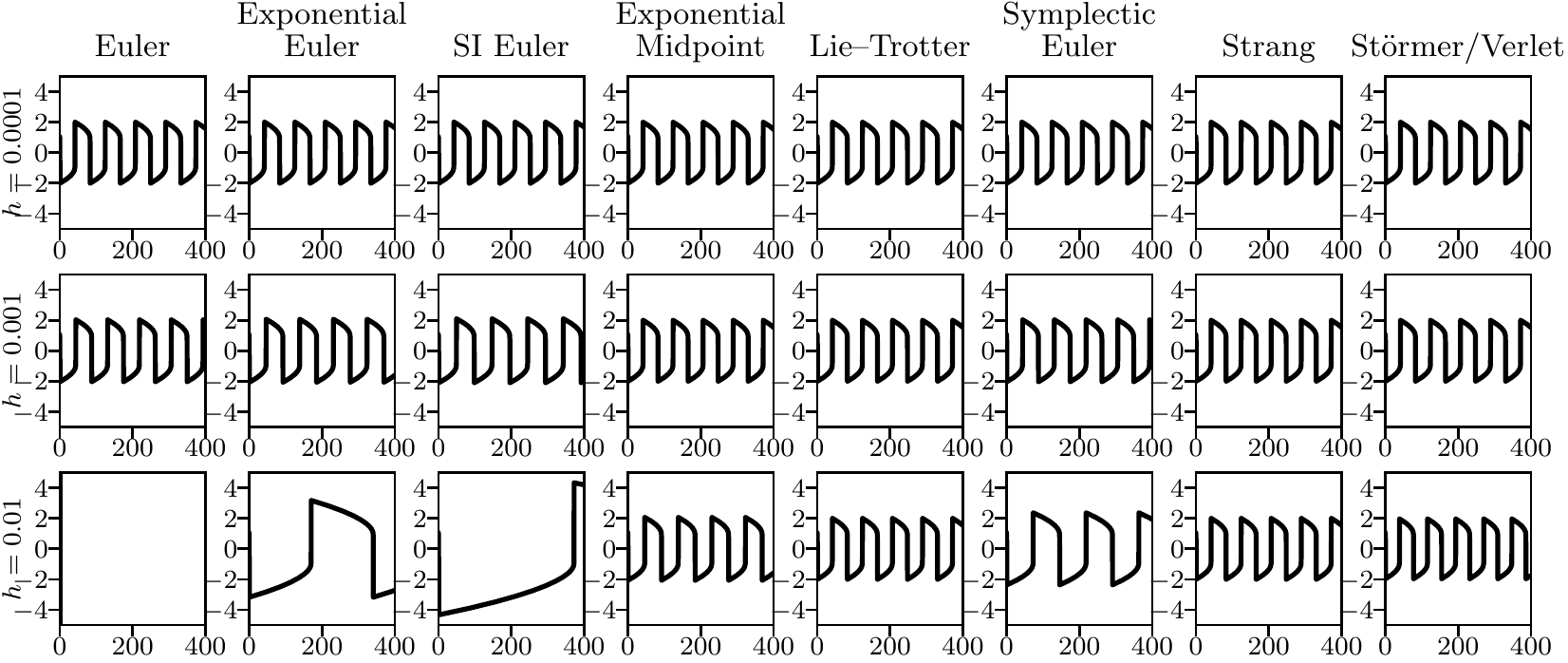}
  \caption{Numerical spiking behavior ($ x _1 $ vs.~$t$) for the
    Van~der~Pol oscillator with $ \epsilon = 50 $. As the time step
    size grows, Euler's method becomes unstable, and the stable
    Euler-type methods exhibit a severe decrease in spike frequency
    and increase in spike amplitude. This frequency and amplitude
    drift is moderate for symplectic Euler and slight for exponential
    midpoint and St\"ormer/Verlet, while the splitting methods show no
    apparent drift in spiking behavior at
    all. \label{fig:vdp_stiff_time}}
\end{figure}

\autoref{fig:vdp_stiff_time} shows time plots for these same numerical
solutions. As $h$ increases, the Euler-type methods show severely
decreased spike frequency and increased spike amplitude. Symplectic
Euler exhibits this same behavior less severely and exponential
midpoint only slightly, while St\"ormer/Verlet exhibits a slight
\emph{increase} in spike frequency. By contrast, the Lie--Trotter and
Strang splitting methods have no apparent change in spike frequency or
amplitude.

This behavior is explained by the preceding observations about the
points at which jumps return to the nullcline. For the stiff
Van~der~Pol oscillator, jumps occur quickly, so nearly all of the time
is spent moving slowly along the nullcline. Numerical solutions that
return to the nullcline at too-large values of $ \lvert y _1 \rvert $,
$ \lvert y _2 \rvert $ must spend more time moving along the nullcline
between jumps, resulting in decreased spike frequency; this is the
case for the Euler-type, symplectic Euler, and exponential midpoint
methods. Likewise, those which return at too-small values of
$ \lvert y _1 \rvert $, $ \lvert y _2 \rvert $ spend less time moving
along the nullcline, resulting in increased spike frequency; this is
the case for St\"ormer/Verlet. This also explains why the Lie--Trotter
method preserves the correct spiking behavior, in spite of the limit
cycle distortion observed in \autoref{fig:vdp_stiff}: this distortion
occurs almost entirely \emph{during} the jumps, where the solution
spends very little time.

\section{Limit cycle preservation for Hodgkin--Huxley neurons}

\subsection{The Hodgkin--Huxley model}
\label{sec:hh_intro}

Based on electrophysiology experiments, \citet{HoHu1952} proposed a
model, consisting of a nonlinear system of partial differential
equations, to describe the dynamics of the membrane potential of the
squid giant axon. If the membrane potential is assumed to be uniform
in space along the axon, the Hodgkin--Huxley model reduces to a
conditionally linear system of ODEs:
\begin{equation}
\label{eqn:hh}
\begin{aligned}
  C \dot{ V } &= I - \bar{ g } _{\mathrm{K}} n ^4 ( V - E
  _{\mathrm{K}} ) - \bar{g}_{\mathrm{Na}} m ^3 h ( V - E
  _{\mathrm{Na}}) - \bar{g} _{\mathrm{L}} ( V - E _{\mathrm{L}}
  ), \\
  \dot{ n } &= \alpha _n (V) ( 1 - n ) - \beta _n ( V ) n ,\\
  \dot{ m } &= \alpha _m (V) ( 1 - m ) - \beta _m ( V ) m ,\\
  \dot{ h } &= \alpha _h (V) ( 1 - h ) - \beta _h ( V ) h .
\end{aligned}
\end{equation}
This describes how $V$, the voltage across a membrane with capacitance
$C$, responds to an input current $I$. (In a neural network, $I$
depends on the membrane voltage of neighboring neurons connected by
synapses. Therefore, a network of Hodgkin--Huxley neurons is also
conditionally linear.) The constants $ \bar{ g } _{\mathrm{K}} $,
$ \bar{ g } _{\mathrm{Na}} $, $ \bar{ g } _{\mathrm{L}} $ and
$ E _{ \mathrm{K} } $, $ E _{ \mathrm{Na} } $, $ E _{ \mathrm{L} } $
are, respectively, the conductances and reversal potentials for the
potassium (K), sodium (Na), and leak (L) channels. The other dynamical
variables, $n$, $m$, $h$, are dimensionless auxiliary quantities
between $0$ and $1$, corresponding to potassium channel activation,
sodium channel activation, and sodium channel inactivation;
$ \alpha _n $, $ \alpha _m $, $ \alpha _h $ and $ \beta _n $,
$ \beta _m $, $ \beta _h $ are given rate functions of $V$.

For the remainder of this section, we take units of mV for $V$ and ms
for $t$ and consider model neurons with the parameters
\begin{equation*}
  \bar{ g } _{\mathrm{K}} = 36, \quad \bar{ g } _{\mathrm{Na}} = 120, \quad \bar{ g } _{\mathrm{L}}  = 0.3, \quad E _{\mathrm{K}} = - 77, \quad E _{ \mathrm{Na} } = 55 , \quad E _{\mathrm{L}} = - 61,
\end{equation*} 
and rate functions
\begin{align*}
  \alpha _n (V) &= \frac{ 0.01 (10 - 65 - V) }{ \exp \bigl( \frac{ 10 - 65 - V}{ 10 } \bigr) - 1 } , & \beta _n (V) &= 0.125 \exp \biggl( \frac{- 65 - V }{ 80 } \biggr) ,\\
  \alpha _m (V) &= \frac{ 0.1 ( 25 - 65 - V ) }{ \exp \bigl( \frac{ 25 - 65 - V }{ 10 } \bigr) - 1 } ,& \beta _m (V) &= 4 \exp \biggl( \frac{ - 65 - V }{ 18 } \biggr),\\
  \alpha _h (V) &= 0.07 \exp \biggl( \frac{ - 65 - V }{ 20 } \biggr) , & \beta _h (V) &= \frac{ 1 }{ \exp \bigl( \frac{ 30 - 65 - V }{ 10 } \bigr) + 1 } .
\end{align*}
These rate functions agree with those in \citet{HoHu1952} with a
resting potential of $ -65 $ mV.

\begin{figure}
  \centering
  \includegraphics[width=\textwidth]{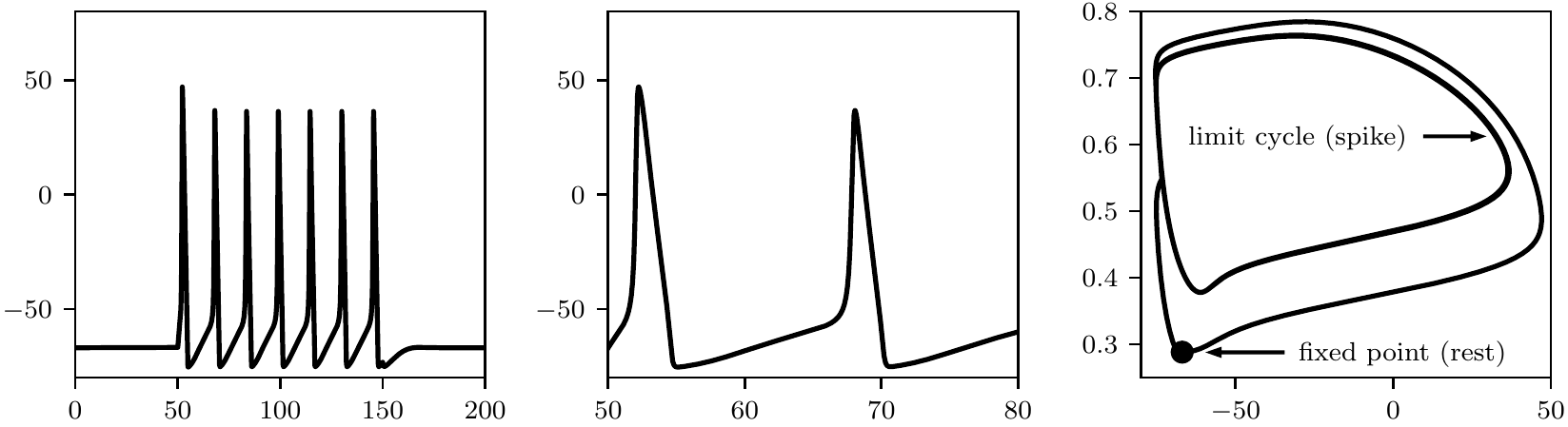}
  \caption{Reference solution for a single Hodgkin--Huxley neuron,
    showing voltage $V$ in mV as a function of time $t$ in ms (left),
    a zoomed-in view of the first two spikes (center), and the
    corresponding curve in the $ ( V, n ) $ phase plane (right). An
    input current of 10~nA is switched on at $ t = 50 $, causing the
    neuron to begin firing periodically, and is switched off again at
    $ t = 150 $, causing it to return to a resting equilibrium. In the
    phase plane, the resting state corresponds to a fixed point, while
    the spiking state corresponds to a limit cycle. The outer
    loop in the phase portrait traces the transition between resting
    and spiking, including the large initial spike when the current is
    first switched on.
    \label{fig:hh_reference}}
\end{figure}

\autoref{fig:hh_reference} shows a reference solution to
\eqref{eqn:hh}, illustrating the bifurcation and limit cycle behavior
that governs neuronal spiking, as discussed in
\autoref{sec:intro}. Since the limit cycle lies on a two-dimensional
center manifold (\citet{Hassard1978,Izhikevich2007}), these dynamics
can be portrayed by projecting to a two-dimensional phase plot in the
$ ( V, n ) $ plane.

The limit cycle, together with a time parametrization, determines the
shape of action potentials. Since we are interested in preserving
these features, it is worth discussing the question: \emph{Is the
  shape of an action potential an important feature for a neuron model
  to capture?} It has been well established that the shape of an
action potential can and does vary between neurons. For example,
regular-spiking pyramidal neuron action potentials are broader and
differ detectably from the narrow, fast-spiking interneurons
(\citet{Bean2007,NoAzSaGrMc2003}). These differences are commonly
exploited during extracellular recordings for identifying the neuron
type; further, even for the same neuron, the shape of the action
potential changes depending on whether the recording is made from
axonal or somatic compartments \citep{Bean2007}. Therefore, faithfully
capturing this neural response feature would be important for modeling
the diversity of cell types typically found in biological neural
networks, and even for developing tightly constrained
multi-compartment models of single neurons. For developing models of
neural networks, the effect of the action potential on the
post-synaptic neuron must be considered. The effect of an action
potential from a pre-synaptic onto the post-synaptic neuron can differ
depending on the type of synapses (chemical vs.\ electrical) between
them (\citet{DeMaSe1994,CuOB2016}). Would differences in action
potential shapes, at the level of individual neurons, alter
information processing and other emergent properties, such as neural
synchronization, at a network level? This is not fully understood and
is a direction for future investigation.

\subsection{Application of numerical methods}

Since \eqref{eqn:hh} is conditionally linear, the application of
Euler-type methods is straightforward; so is the application of other
explicit exponential Runge--Kutta methods, such as the exponential
midpoint method. The use of the exponential Euler method for
Hodgkin--Huxley was proposed by \citet{MoRa1974}, and it has since
been in widespread use in computational neuroscience, including as the
default numerical integrator in the GENESIS software package
(\citet{BoBe1998}). \citet{BuMc2004} showed that exponential Euler
often gives larger errors than Euler's method for moderately large
time steps where both methods are stable. \citet{BoNe2013}, in
addition to exponential Euler, also considered the {SI} Euler and
exponential midpoint methods for Hodgkin--Huxley.

In order to apply the splitting and composition methods of
\autoref{sec:splitting} to Hodgkin--Huxley, we begin by splitting the
vector field $f$ defining the right-hand side of \eqref{eqn:hh} into
\begin{equation*}
  f = f ^{ (V) } + f ^{ (n) } + f ^{ (m) } + f ^{ (h) }.
\end{equation*}
However, the vector fields $ f ^{ ( n ) } $, $ f ^{ (m) } $, and
$ f ^{ (h) } $ commute, since when $V$ is held fixed, the variables
$n$, $m$, $h$ evolve independently. Taking
$ f ^{ ( n, m, h) } = f ^{ (n) } + f ^{ (m) } + f ^{ (h) } $, we have
$ \Phi _h ^{ (n,m,h) } = \Phi _h ^{ (n) } \circ \Phi _h ^{ (m) } \circ
\Phi _h ^{ (h) } $ for any exact or approximate flow $ \Phi _h $, and
we can evolve these flows in parallel (see
\autoref{rmk:commuting_flows}).

Therefore, in spite of the fact that \eqref{eqn:hh} is
four-dimensional, we can construct splitting and composition methods
using only two flows. As before, we consider non-symmetric and
symmetric splitting/composition methods:
\begin{equation*}
  \Phi _h ^{ (V) } \circ \Phi _h ^{ (n,m,h) } , \qquad \Phi _{ h/2 } ^{ (n,m,h) \ast } \circ \Phi _{ h /2 } ^{(V)\ast} \circ \Phi _{ h/2 } ^{ (V) } \circ \Phi _{h/2} ^{ (n,m,h) }  .
\end{equation*}
Taking $ \Phi _h ^{ ( V ) } = \varphi _h ^{ ( V) } $ and
$ \Phi _h ^{ ( n, m , h ) } = \varphi _h ^{ (n,m,h) } $ to be the
exact flows, the non-symmetric method is again the Lie--Trotter
splitting method, and the symmetric method is the Strang splitting
method. On the other hand, if we take $ \Phi _h ^{ (V) } $ to be
Euler's method and $ \Phi _h ^{ (n,m,h) } $ to be the backward Euler
method, then we again refer to the resulting composition methods as
symplectic Euler and St\"ormer/Verlet, respectively. We refer back to
\autoref{ex:composition} and \autoref{ex:splitting} for the explicit
formulas for these methods.

As with the $ {d} = 2 $ symmetric splitting/composition methods, we
may reuse the nonlinear function evaluation from
$ \Phi _{ h/2 } ^{ (V) } $ for $ \Phi _{ h / 2 } ^{ (V) \ast } $, and
likewise from $ \Phi _{ h / 2 } ^{ (n,m,h) \ast } $ for
$ \Phi _{ h / 2 } ^{ (n,m,h) } $. Therefore, with the exception of the
very first half-step of the symmetric method, both the non-symmetric
and symmetric splitting/composition methods require only one
evaluation of each nonlinear function per step, just as for the
Euler-type methods.

\begin{remark}
  \label{rmk:hines}
  The St\"ormer/Verlet method can be seen as alternating between the
  time-$h$ flows
  $ \Phi _{ h / 2 } ^{ (V) \ast } \circ \Phi _{ h / 2 } ^{ (V) } $ and
  $ \Phi _{ h / 2 } ^{ (n,m,h) } \circ \Phi _{ h / 2 } ^{ (n,m,h) \ast
  } $, both of which correspond to the trapezoid method. If we ignore
  the half-steps, this can be seen as a staggered-grid method with $V$
  stored at integer time steps and $ ( n , m , h ) $ at half-integer
  time steps. This perspective, with $V$ and $ (n,m,h) $ stored at
  staggered time steps and advanced using the trapezoid method, was
  suggested by \citet[Eq.~8]{Hines1984} and later appeared in
  \citet[p.~599--600]{MaSh1998}.

  The St\"ormer/Verlet formulation above has the advantage of
  producing values of $ ( V, n, m , h ) $ all at integer time steps,
  rather than only at staggered time steps. A slightly different
  non-staggered formulation of Hines' method was
  recently proposed by \citet{Hanke2017}, using Euler's method instead
  of backward Euler for $ \Phi _{h/2} ^{ (n,m,h) } $.
\end{remark}

\subsection{Numerical experiments}
\label{sec:hh_numerics}

\begin{figure}
  \centering
  \includegraphics[width=\textwidth]{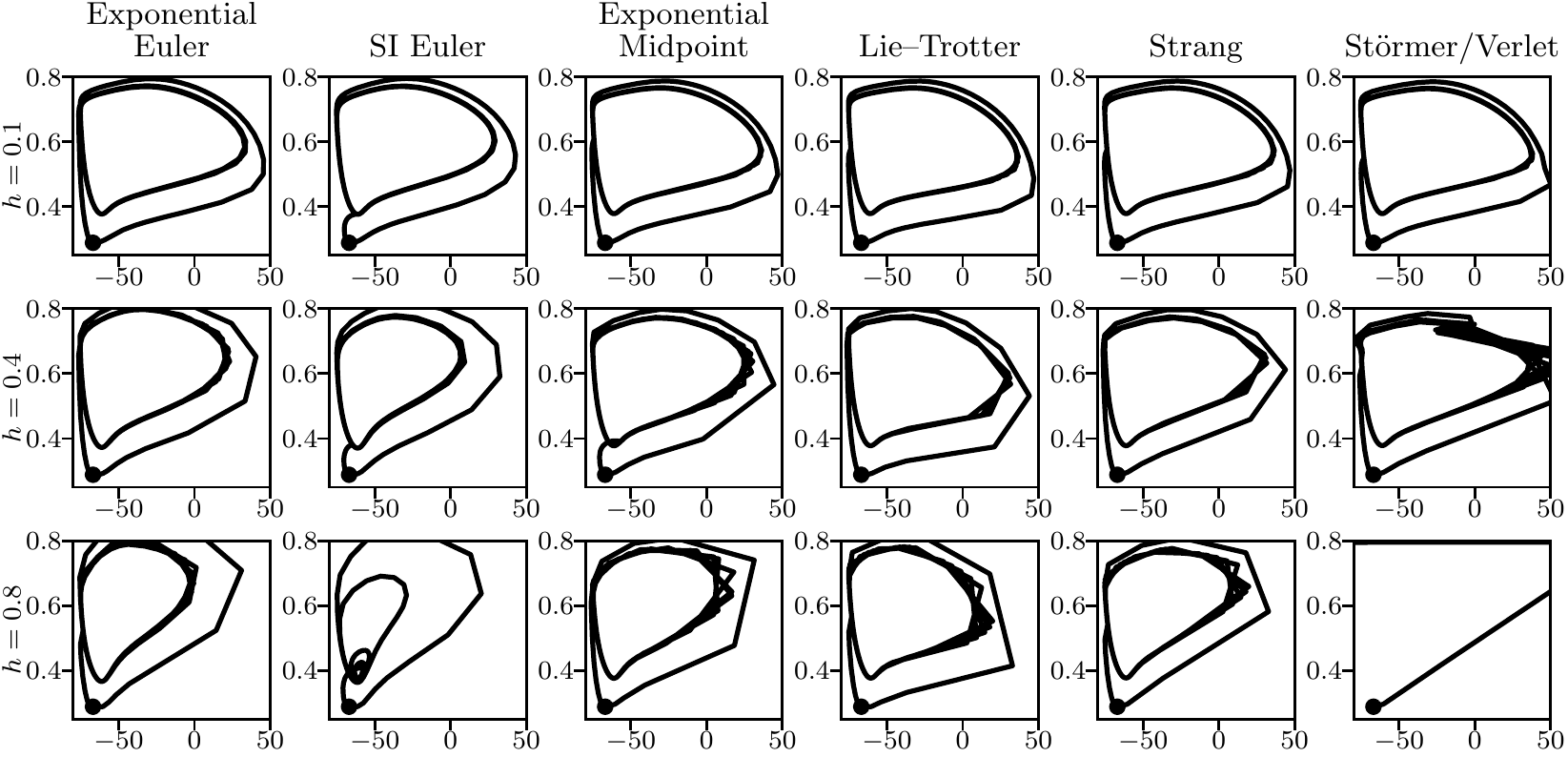}
  \caption{Numerical bifurcation and limit-cycle behavior for the
    Hodgkin--Huxley neuron in the $ ( V, n ) $ phase plane. The
    exponential midpoint and splitting methods show better limit-cycle
    preservation (particularly with respect to shrinking in the $V$
    direction) than the exponential Euler or (especially) SI Euler
    methods as $h$ increases, while the St\"ormer/Verlet method
    becomes numerically unstable.\label{fig:hh_phase}}
\end{figure}

\begin{figure}
  \centering
  \includegraphics[width=\textwidth]{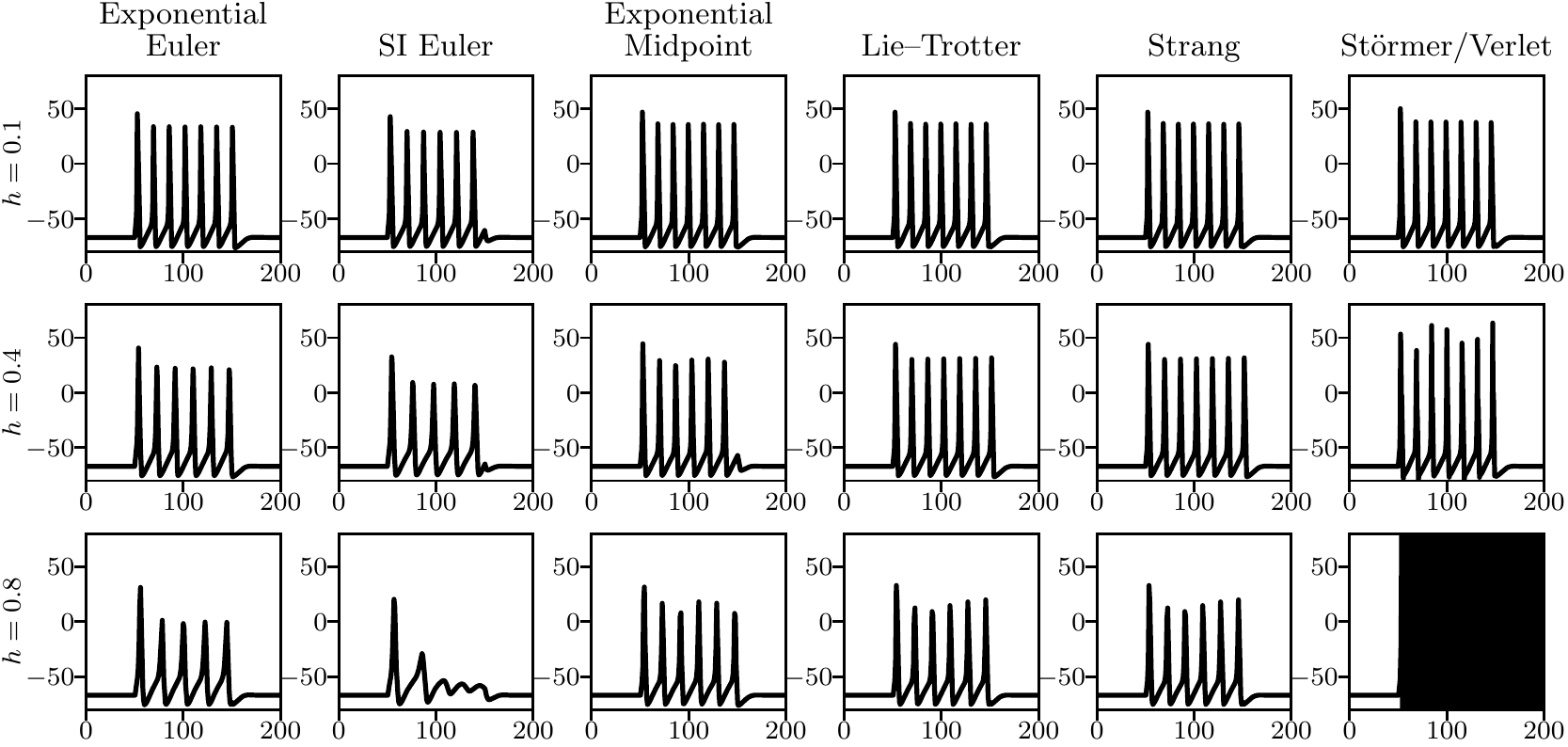}
  \caption{Numerical spiking behavior ($V$ in mV vs.~$t$ in ms) for the
    Hodgkin--Huxley neuron, which is stimulated by a 10~nA input current
    between $ t = 50 $ and $ t = 150 $~ms. The Lie--Trotter and Strang
    splitting methods show less severe decrease in spiking frequency and
    amplitude than either the exponential Euler or (especially) SI Euler
    methods, and very slightly less than the exponential midpoint method. For example, both splitting methods emit the correct number
    of spikes, 7, at $ h = 0 . 4 $, while exponential midpoint and
    exponential Euler emit 6 spikes and SI Euler only 5 spikes. The
    St\"ormer/Verlet method becomes numerically unstable.
    \label{fig:hh_time}}
\end{figure}

\begin{figure}
  \centering
  \includegraphics[width=\textwidth]{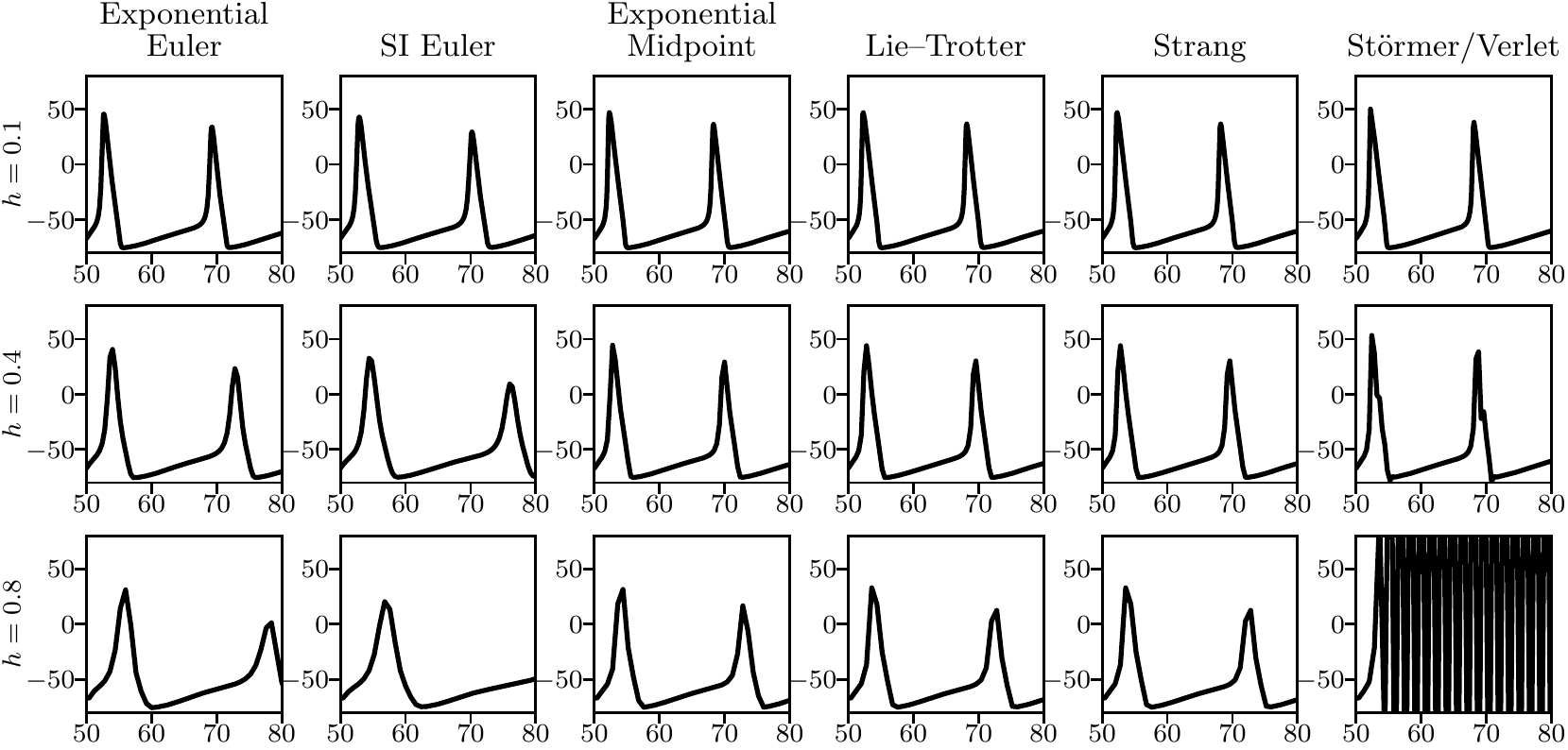}
  \caption{A zoomed-in view of the plots in \autoref{fig:hh_time},
    showing the shape of the first two action potentials. For the
    exponential Euler method, the spikes become less sharp and more
    spread out as $h$ increases, and this effect is even worse for the
    SI Euler method. The exponential midpoint and splitting methods do
    a better job at preserving the shape and width of the spikes,
    while the St\"ormer/Verlet method becomes unstable for large
    $h$.\label{fig:hh_spike}}
\end{figure}

\autoref{fig:hh_phase}, \autoref{fig:hh_time}, and
\autoref{fig:hh_spike} show the results of applying Euler-type
methods, the exponential midpoint method, and splitting/composition
methods to the Hodgkin--Huxley system \eqref{eqn:hh}, with the
parameters and rate functions specified in \autoref{sec:hh_intro}, for
a range of large time step sizes. Euler's method and the symplectic
Euler method both exhibited numerical instability at these time step
sizes, so they do not appear in the figures. The specific problem is
the same one for which the reference solution appears in
\autoref{fig:hh_reference}: the neuron is simulated for 200~ms, with a
constant 10~nA input current switched on at $ t = 50 $~ms and switched
off again at $ t = 150 $~ms. The neuron is initially at rest, begins
spiking after the input current is switched on, and returns to rest
after the input current is switched off.

As they did for the stiff Van~der~Pol oscillator in
\autoref{sec:vdp_stiff}, the exponential Euler and {SI} Euler methods
exhibit limit cycle distortion and decreased spiking frequency as $h$
increases, and this is especially severe for the SI Euler method. For
instance, notice that in the reference solution, the neuron fires 7
spikes in response to the input current. In \autoref{fig:hh_time} the
exponential Euler method fires 7 spikes at $ h = 0.1 $, 6 spikes at
$ h = 0.4 $, and 5 spikes at $ h = 0.8 $, while the SI Euler method
fires only 6 spikes at $ h = 0.1 $ and 5 spikes at $ h = 0.4 $, and
its spiking behavior is essentially damped away at $ h = 0.8 $.
Decreased spiking amplitude is also apparent, which corresponds to the
limit cycle being compressed horizontally, as observed in
\autoref{fig:hh_phase}. Zooming in on the spikes, in
\autoref{fig:hh_spike}, we see that they become less sharp and more
stretched out for these methods as $h$ increases.

By contrast, the Lie--Trotter and Strang splitting methods both appear
to do a better job at preserving the limit cycle, as well as spiking
frequency and amplitude, as $h$ increases. Both splitting methods fire
7 spikes at $ h = 0 .1 $ and $ h = 0. 4 $, and this decreases to 6
spikes at $ h = 0.8 $. Some decrease in spiking amplitude is apparent
at $ h = 0 . 8 $, although at each step size there is less amplitude
decay than for either the exponential Euler or SI Euler
methods. Therefore, although some decrease in spiking frequency and
amplitude occurs for the splitting methods, it appears to be notably
less severe than for these other methods. Likewise, the spikes remain
sharper and less stretched out for the splitting methods than for the
Euler-type methods, although some smearing out of the spike shape is
clearly visible at $ h = 0.8 $.

The exponential midpoint method performs slightly worse than the
splitting methods at the same time step size: it only fires 6 spikes
at $ h = 0.4 $, although it comes close to firing a 7th, and the spike
amplitude exhibits fluctuations. At $ h = 0.8 $, its decrease in
spiking amplitude and frequency is comparable to that of the splitting
methods. Preservation of spike shape is also similar to the splitting
methods. However, since the exponential midpoint method requires
twice as many function evaluations per step, one could use a splitting
method with $ h = 0.4 $ at the same cost as exponential midpoint with
$ h = 0.8 $, and by that comparison the splitting methods are clearly
superior.

Finally, the St\"ormer/Verlet method---which, as we noted in
\autoref{rmk:hines}, can be seen as a non-staggered version of Hines'
method \citep{Hines1984}---becomes unstable as $h$ increases. Although
it exhibits the correct behavior (both in terms of spiking frequency
and amplitude) at $ h = 0 .1 $, the beginnings of instability are
visible at $ h = 0 . 4 $ in the increase and fluctuation in spiking
amplitude, while the method has become unstable by $ h = 0.8 $.

These experiments suggest that the Lie--Trotter and Strang splitting
methods can preserve the correct spiking behavior in Hodgkin--Huxley
neurons for larger time steps than can the exponential Euler or {SI}
Euler methods---and the SI Euler method performs especially poorly in
this regard. The exponential midpoint method performs similarly to the
splitting methods but at twice the cost per time step; after
normalizing for computational cost, the splitting methods come out on
top. Methods involving Euler steps, including Euler's method,
symplectic Euler, and St\"ormer/Verlet, begin to suffer from numerical
instability at these large time step sizes.

\subsection{Remarks on reduced Hodgkin--Huxley models}
The numerical experiments in \citet{BoNe2013} do not actually simulate
the full, four-dimensional Hodgkin--Huxley system
\eqref{eqn:hh}. Rather, they make the simplifying assumption that
sodium activation is ``instantaneous,'' so that
$ m = m _\infty (V) \coloneqq \alpha _m (V) / \bigl( \alpha _m (V) +
\beta _m (V) \bigr) $, which eliminates the differential equation for
$m$ and yields a three-dimensional reduced model. The reduced model is
no longer conditionally linear, since $ m _\infty (V) $ appears in the
differential equation for $V$, but \citet{BoNe2013} deal with this by
freezing $m$ at the beginning of each step, including the intermediate
step of exponential midpoint.

Because of the differences between the full Hodgkin--Huxley model and
the reduced model, there are some discrepancies between the numerical
results presented here and those in \citet{BoNe2013}, especially with
regard to the decreasing amplitude of spikes at large time steps,
which we observe and they do not. The purpose of this section is to
briefly discuss these differences and to point out some of the
dynamical changes that are introduced by using the reduced model.

The primary purpose of reducing the Hodgkin--Huxley model to two or
three dimensions, by assuming instantaneous sodium activation and/or
replacing $n$ and $h$ by a single variable, is to obtain a simpler
system whose qualitative dynamics can be more easily
analyzed. \citet{Meunier1992} compares the dynamics of these reduced
models to those of Hodgkin--Huxley, observing several ``spurious
effects not displayed by the original models'' that ``arise mainly
from the assumption that the variable $m$ follows instantaneously the
variations of $V$,'' and concludes that ``taking into account properly
the dynamics of sodium activation has more importance than choosing a
reduction scheme or another for the slow variables.''  While reduction
is useful for understanding qualitative dynamics, he points out that
``[i]f saving computation time was one of the reasons originally it is
no longer of dominant importance'' and ``it is not meant to be used as
a substitute to the original equations in network simulations.''

\begin{figure}
  \centering
  \includegraphics[width=\textwidth]{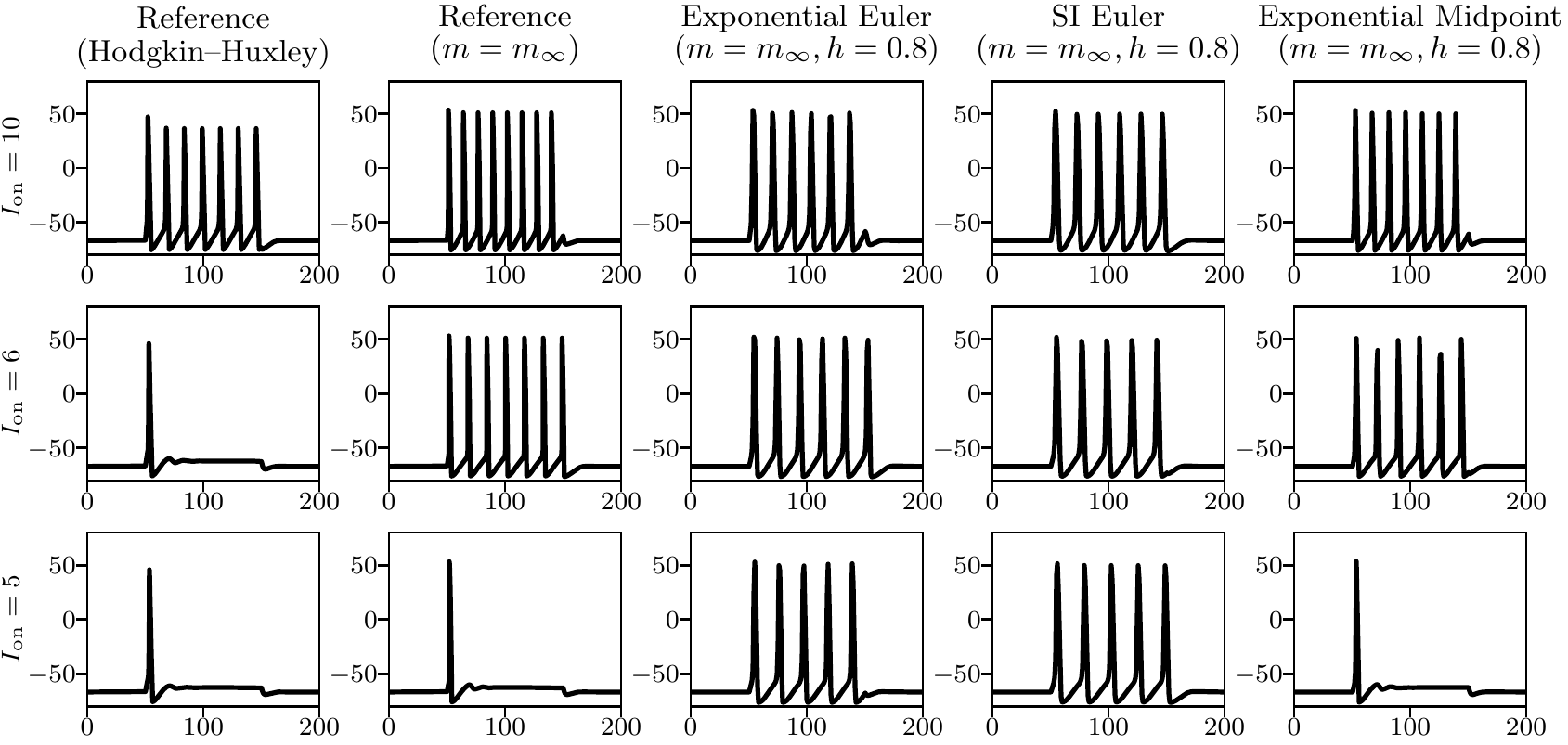}
  \caption{Treating sodium activation as instantaneous
    ($ m = m _\infty $) results in different spiking behavior than in
    the original Hodgkin--Huxley model. For $ I_\text{on} = 10 $, the
    Hodgkin--Huxley model emits 7 spikes, the first one larger than
    the other 6, whereas the $ m = m _\infty $ model emits 8 large
    spikes of roughly the same amplitude. For $ I_\text{on} = 6 $, the
    Hodgkin--Huxley model emits only one spike and returns to rest,
    whereas the $ m = m _\infty $ model emits 7 large spikes. For
    $ I_\text{on} = 5 $, both models emit only one spike. At
    $ h = 0 .8 $~ms, the Euler-type methods and exponential midpoint
    method exhibit decrease in spiking frequency---but not in
    amplitude, as they did with the full Hodgkin--Huxley model---and
    the Euler-type methods spike spuriously at
    $ I_\text{on} = 5 $.\label{fig:hh_minfty}}
\end{figure}

\autoref{fig:hh_minfty} illustrates some of the ``spurious effects''
that can result from treating sodium activation as instantaneous,
including spiking at subthreshold input currents.  We perform the same
numerical experiment as in \autoref{sec:hh_numerics}, switching on an
input current $ I_\text{on} $ between $ t = 50$ and $ t = 150 $~ms. In
addition to $ I _\text{on} = 10 $~nA, which was used in the previous
experiments, we also consider currents of 6~nA and 5~nA, which are
subthreshold for the original Hodgkin--Huxley model. Only the
exponential Euler, SI Euler, and exponential midpoint methods of
\citet{BoNe2013} are shown, since the splitting/composition methods
fundamentally require conditional linearity.

Finally, we remark that, in spite of its dynamical differences from
Hodgkin--Huxley, this reduced model is widely used and useful. The
exponential Euler, SI Euler, and exponential midpoint methods are well
suited to it, whereas the splitting/composition methods are only
applicable to the full Hodgkin--Huxley model. Whether or not to use
instantaneous sodium activation is ultimately a modeling question
rather than a numerical one, and it should not be used numerically if
one actually wishes to simulate the full Hodgkin--Huxley dynamics.

\section{Conclusion}

For conditionally linear systems, such as the Van~der~Pol oscillator
and Hodgkin--Huxley model of neuronal dynamics, the exponential Euler
and SI Euler methods remain stable at much larger time step sizes than
the classical Euler method, with no additional computational cost per
step. However, we have shown that these Euler-type methods preserve
the limit cycles of these systems poorly as the time step size grows,
producing oscillations with the wrong amplitude and/or
frequency. (This adds to previous observations about the inaccuracy of
exponential Euler with large time steps for the Hodgkin--Huxley model,
cf.~\citet{BuMc2004}.) Since limit cycles underlie the dynamics of
neuronal spiking, this is a serious problem for the use of Euler-type
methods with large time steps.

By contrast, we have shown that splitting and composition methods
exhibit better limit cycle preservation for these systems as time step
size grows, with no additional cost per step. For the non-stiff
Van~der~Pol oscillator, the (first-order, non-symmetric) Lie--Trotter
splitting and symplectic Euler composition methods perform similarly,
as do the (second-order, symmetric) Strang splitting and
St\"ormer/Verlet methods. However, for the stiff Van~der~Pol
oscillator and Hodgkin--Huxley models, the splitting methods both do a
better job of preserving the frequency and amplitude of oscillations,
and maintaining numerical stability, than the corresponding
composition methods. With respect to these properties, the splitting
methods are also seen to outperform the exponential midpoint method,
which requires twice as many function evaluations per step.

Of all the methods considered, the Strang splitting method exhibits
the best performance across the spectrum of non-stiff and stiff
systems. This method also has the advantage of being second-order,
although its order is \emph{not} the main reason for its superior
performance; this is evinced by the fact that the first-order
Lie--Trotter splitting method performs nearly as well and outperforms
the Euler-type methods having the same order. The Strang splitting
method ought to be seriously considered as a competitor to exponential
Euler as the ``standard'' integration method for such problems.

Although we have focused on the performance of these integrators for
individual Hodgkin--Huxley neurons, future work should look at the
implementation and performance of the Strang splitting method for
networks of neurons. For large networks, \autoref{sec:euler-splitting}
lays out a hybrid Euler-type/splitting approach by which such a system
could be partitioned for efficient parallel implementation.  Another
direction for future work involves exponential integrators based on
local linearization, such as the exponential Rosenbrock--Euler method
(cf.~\citet{HoOs2010}), rather than coordinate splitting.

\subsection*{Acknowledgments}
Zhengdao Chen was supported in part by an ARTU research fellowship in
the Department of Mathematics and Statistics at Washington University
in St.~Louis. Baranidharan Raman was supported in part through an NSF
CAREER grant (\#1453022). Ari Stern was supported in part by grants
from the NSF (DMS-1913272) and from the Simons Foundation
(\#279968). We also wish to thank the anonymous referees for their
helpful comments and suggestions.

\end{document}